\documentclass[11pt]{article}
\usepackage{amsfonts,amsmath,latexsym,color,epsfig}
\setlength{\textheight}{8.7in} \setlength{\textwidth}{6.7in}
\usepackage{amsfonts,amsmath,latexsym,color,epsfig}
\setlength{\textheight}{8.7in} \setlength{\textwidth}{6.7in}
\setlength{\topmargin}{0pt} \setlength{\evensidemargin}{1pt}
\setlength{\oddsidemargin}{1pt} \setlength{\headsep}{10pt}
\setlength{\parskip}{1mm} \setlength{\parindent}{0mm}

\newtheorem{theorem}{Theorem}[section]
\newtheorem{lemma}{Lemma}[section]
\newtheorem{proposition}{Proposition}[section]
\newtheorem{conjecture}{Conjecture}[section]

\newenvironment{proof}
      {\medskip\noindent{\bf Proof:}\hspace{1mm}}
      {\hfill$\Box$\medskip}

\input{epsf}

\makeatletter
\def\Ddots{\mathinner{\mkern1mu\raise\p@
\vbox{\kern7\p@\hbox{.}}\mkern2mu
\raise4\p@\hbox{.}\mkern2mu\raise7\p@\hbox{.}\mkern1mu}}
\makeatother

\title{\vspace{-0.7cm}Two extensions of Ramsey's theorem}
\author{David Conlon\thanks{Mathematical Institute, Oxford OX1 3LB, United
Kingdom.
E-mail: {\tt
david.conlon@maths.ox.ac.uk}. Research supported by a Royal Society University Research Fellowship.} \and Jacob Fox\thanks{Department of Mathematics, MIT,
Cambridge, MA 02139-4307. Email: {\tt fox@math.mit.edu}. Research
supported by a Simons Fellowship and NSF grant DMS-1069197.} \and Benny Sudakov\thanks{Department of Mathematics,
UCLA,  Los Angeles, CA 90095. Email: {\tt bsudakov@math.ucla.edu}. 
Research supported in part by NSF grant DMS-1101185, NSF CAREER award DMS-0812005 and by USA-Israeli BSF grant.} \and \\
Accepted for publication in Duke Mathematical Journal}
\date{}
\begin{document}
\maketitle

\begin{abstract}

Ramsey's theorem, in the version of Erd\H{o}s and Szekeres, states that every $2$-coloring of the edges of the complete graph on $\{1, 2,\ldots,n\}$ contains a monochromatic clique of 
order $\frac{1}{2}\log n$. In this paper, we consider two well-studied extensions of Ramsey's theorem.

Improving a result of R\"odl, we show that there is a constant $c>0$ such that every $2$-coloring of the edges of the complete graph on $\{2, 3,...,n\}$ contains a monochromatic clique 
$S$ for which the sum of $1/\log i$ over all vertices $i \in S$ is at least $c\log\log\log n$. This is tight up to the constant factor $c$ and answers a question of 
Erd\H{o}s from 1981.

Motivated by a problem in model theory, V\"a\"an\"anen asked whether for every $k$ there is an $n$ such that the following holds. For every permutation $\pi$ of $1,\ldots,k-1$, every 
$2$-coloring of the edges of the complete graph on $\{1, 2, \ldots, n\}$ contains a monochromatic clique $a_1<\ldots<a_k$ with 
$$a_{\pi(1)+1}-a_{\pi(1)}>a_{\pi(2)+1}-a_{\pi(2)}>\ldots>a_{\pi(k-1)+1}-a_{\pi(k-1)}.$$ 
That is, not only do we want a monochromatic clique, but the differences between consecutive 
vertices must satisfy a prescribed order. Alon and, independently, Erd\H{o}s, Hajnal and Pach answered this question affirmatively. Alon further conjectured that the true growth rate 
should be exponential in $k$. We make progress towards this conjecture, obtaining an upper bound on $n$ which is exponential in a power of $k$. This improves a result of Shelah, who 
showed that $n$ is at most double-exponential in $k$. 
\end{abstract}

\section{Introduction}

Ramsey theory refers to a large body of deep results in mathematics whose underlying philosophy is captured succinctly by the statement that ``Every large system contains a large well-organized subsystem.'' This subject is currently one of the most active areas of research within combinatorics, overlapping substantially with number theory, geometry, analysis, logic and computer science (see the book \cite{GRS80} for details). The cornerstone of this area is Ramsey's theorem, which guarantees the existence of Ramsey numbers. 

The Ramsey number $r(k)$ is the minimum $n$ such that in every $2$-coloring of the edges of the complete graph $K_n$ there is a monochromatic $K_k$. Ramsey's theorem \cite{R30} states that $r(k)$ exists for all $k$. Classical results of Erd\H{o}s \cite{E47} and Erd\H{o}s and Szekeres \cite{ES35} give the quantitative bounds $2^{k/2} \leq r(k) \leq 2^{2k}$ for $k \geq 2$. Over the last sixty years, there have been several improvements on these bounds (see, for example, \cite{Co}). However, despite efforts by various researchers, the constant factors in the above exponents remain the same. 

Given these difficulties, it is natural that the field has stretched in different directions. One such direction is to try to strengthen Ramsey's theorem, asking that the monochromatic 
clique have some additional structure. This allows us to test the limits of current methods and may also lead to the development of new techniques which could be relevant to the 
original problem of estimating classical Ramsey numbers. Furthermore, for some applications such additional structure is needed. One famous example of a theorem which strengthens Ramsey's theorem is the Paris-Harrington theorem \cite{PH77}. In this paper, we consider two further strengthenings, 
both of which have already been studied in some detail.

\subsection{Ramsey's theorem with skewed vertex distribution}

In the early 1980s, Erd\H{o}s, interested in the distribution of monochromatic cliques in edge-colorings, considered the following variant of Ramsey's theorem. For a finite set $S$ of integers greater 
than one, define its weight $w(S)$ by $$w(S)=\sum_{s \in S} \frac{1}{\log s}.$$ For a red-blue edge-coloring $c$ of the edges of the complete graph on $[2,n]=\{2,\ldots,n\}$, let 
$f(c)$ be the maximum weight $w(S)$ over all sets $S \subset [2,n]$ which form a monochromatic clique in coloring $c$. For each integer $n \geq 2$, let $f(n)$ be the minimum of $f(c)$ 
over all red-blue edge-colorings $c$ of the edges of the complete graph on $\{2,\ldots,n\}$. Note that a simple application of $r(k) \leq 2^{2k}$ only gives $f(n) \geq \frac{\log n}{2} 
\cdot \frac{1}{\log n}=\frac{1}{2}$.

In his paper {\it `On the combinatorial problems I would most like to see solved'}, Erd\H{o}s \cite{E81} conjectured that $f(n)$ tends to infinity and, furthermore, asked for an 
accurate estimate of $f(n)$. Soon after, R\"odl \cite{R03} verified this conjecture, showing that $f(n)=\Omega(\frac{\log \log \log \log n}{\log \log \log \log \log n})$. In the other 
direction, by considering a uniform random coloring of the edges, one can easily obtain $f(n)=O(\log \log n)$. R\"odl \cite{R03} 
improved this upper bound further to $f(n)=O(\log \log \log n)$. Nevertheless, there was still an exponential gap between 
the bounds for $f(n)$.

We will now discuss R\"odl's coloring that gives the estimate $f(n) = O(\log \log \log n)$. Cover the interval $[2,n]$ by $t=\lceil \log \log n \rceil$ intervals, where the $i$th interval is $[2^{2^{i-1}},2^{2^i})$. We first describe the 
coloring of the edges within each of these $t$ intervals, and then the coloring of the edges between these intervals. Using that the Ramsey number $r(k) \geq 2^{k/2}$, we can
edge-color the complete graph in the $i$th  interval so that the maximum monochromatic clique in
this interval has order $2^{i+1}$. Also note that the logarithm of any element in the $i$th interval  is at least $2^{i-1}$. Therefore, 
the maximum weight of any monochromatic clique in this interval is at most $4$. It follows again from the lower bound on $r(k)$ that there is a red-blue edge-coloring 
of the complete graph on $t=\lceil \log \log n \rceil$ vertices whose largest 
monochromatic clique is of order $O(\log t)$. Color the edges of the complete bipartite graph between the $i$th and $j$th interval by the color of edge $(i,j)$ 
in this coloring. We get a red-blue edge-coloring of the complete graph on $[2,n]$ such that any monochromatic clique in this coloring has a non-empty intersection with at most
$O(\log t)$ intervals. Since, as we explained above, every interval can contribute  at most $4$ to the weight of this clique, the 
total weight of any monochromatic clique is $O(\log t)=O(\log \log \log n)$.

The key idea behind R\"odl's lower bound for $f(n)$ is to try and force the type of situation that arises in this upper bound construction. We follow the basic line of his argument but add two extra ideas, dependent random choice and a weighted variant of Ramsey's theorem, to achieve a tight result. That is, we prove that $f(n)=\Omega(\log \log \log n)$, which, by the above construction of R\"odl, is tight up to a constant factor. This determines the growth rate of 
$f(n)$ and answers Erd\H{o}s' question. 

\begin{theorem}\label{erdosrodl}
For $n$ sufficiently large, every $2$-coloring of the edges of the complete graph on the interval $\{2,\ldots,n\}$ contains a monochromatic clique with vertex set $S$ such that $$\sum_{s \in S} \frac{1}{\log s} \geq 2^{-8} \log \log \log n.$$ Hence, $f(n)=\Theta(\log \log \log n)$. 
\end{theorem}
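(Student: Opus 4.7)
The plan is to reverse-engineer R\"odl's upper bound construction: partition $[2,n]$ into nested intervals $I_j = [2^{2^{j-1}}, 2^{2^j})$ for $j=1,\dots,t$ with $t = \lceil \log \log n \rceil$. Every $s \in I_j$ has $1/\log s = \Theta(2^{-j})$, while $|I_j|$ is roughly $2^{2^j}$, large enough by Erd\H{o}s--Szekeres to support a monochromatic clique of order $\Omega(2^j)$ and hence of weight $\Omega(1)$. The goal is therefore to produce a single monochromatic clique that spans $\Omega(\log t) = \Omega(\log\log\log n)$ different intervals, contributing $\Omega(1)$ weight from each.

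Within each $I_j$, classical Ramsey supplies a monochromatic clique of size $\Omega(2^j)$; by pigeonhole over the $t$ intervals, at least half share the same colour, say red, giving red cliques $R_j \subset I_j$ of weight $\Omega(1)$ on an index set of size $\Omega(t)$. Define a reduced $2$-colouring on these indices by colouring $\{i,j\}$ red if a large fraction of the bipartite edges between $R_i$ and $R_j$ are red, and blue otherwise. Apply Erd\H{o}s--Szekeres to this reduced colouring to obtain a monochromatic index set $J$ of size $\Omega(\log t)$. Then use dependent random choice across the intervals indexed by $J$ to extract subcliques $R_j' \subseteq R_j$ such that each $R_j'$ still has weight $\Omega(1)$ in $I_j$ and every bipartite graph between distinct $R_i', R_j'$ is truly monochromatic. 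The union $\bigcup_{j \in J} R_j'$ is then a monochromatic clique of total weight $\Omega(|J|) = \Omega(\log\log\log n)$.

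Two accounting points require care. First, dependent random choice must be applied simultaneously across all $|J| = \Omega(\log t)$ selected intervals, absorbing the shrinking of each $R_j$ into only a constant-factor weight loss rather than a $\mathrm{polylog}(t)$ loss; the standard argument (sample a small random witness set inside one interval, pass to its common neighbourhood in the others, bound the expected number of bad configurations via convexity) should deliver this if tuned correctly. Second, when the reduced colouring produces a clique on $J$ of the \emph{opposite} colour from the intra-interval red cliques $R_j$, one must recover enough blue structure across the $R_j$'s to build the desired blue clique; this is where the weighted variant of Ramsey's theorem enters, handling the asymmetry between inside-interval and between-interval colour counts while respecting the vertex weights $1/\log s$. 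The main obstacle lies in pushing these combined losses down to an absolute multiplicative constant rather than something decaying in $t$, which is precisely what gives rise to the explicit factor $2^{-8}$ appearing in the statement.
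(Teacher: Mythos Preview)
Your outline assembles the right ingredients---intervals, dependent random choice, a reduced colouring, and a weighted Ramsey step---but the order in which you deploy them is inverted, and this is a genuine obstruction rather than a tuning issue. You first extract monochromatic cliques $R_j$ inside each interval and only then try to clean the bipartite graphs between them via dependent random choice. But $|R_j| = \Theta(2^j) = \Theta(\log |I_j|)$ is already logarithmically small relative to its interval; dependent random choice needs a host set large enough that shrinking by a factor $p^t$ still leaves room, and iterating over $|J|$ intervals while keeping each $R_j'$ a constant fraction of $R_j$ is not something the lemma delivers. The paper proceeds in the opposite order: it runs dependent random choice on \emph{polynomial-size} subsets of the intervals (sets of size $n_j^{\Theta(1)}$), producing sets $S_{j,j}$ such that all edges between $S_{i,i}$ and $S_{j,j}$ already have a single colour $\chi(i,j)$; only \emph{after} this cleaning does it extract cliques inside each $S_{j,j}$. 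Since those cliques have size at most $\log n_j \cdot \log\log\log n$, they fall under the ``every $s$ vertices have many common neighbours'' guarantee, so no further cleaning is needed. To give the iterated DRC enough room, the paper also uses only $d=\Theta(\sqrt{\log\log n})$ intervals spaced so that $\log n_{j+1}/\log n_j = 2^{\sqrt{\log\log n}}$, rather than $\log\log n$ consecutive dyadic blocks.

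The second gap is the ``opposite colour'' case, which is where the real content of the weighted Ramsey lemma lies; a gesture toward it is not enough. Having committed to red cliques $R_j$, if the reduced clique on $J$ comes out blue you have already discarded the blue structure you need, and there is no mechanism to recover it from inside the $R_j$. The paper never commits: inside each cleaned set $S_{j,j}$ it keeps \emph{both} the largest red clique (of order $\tfrac14 r_j\ln n_j$) and the largest blue clique (of order $\tfrac14 b_j\ln n_j$), and an off-diagonal Erd\H{o}s--Szekeres estimate forces $r_j,b_j$ to satisfy a specific balancing relation of the form $r_j\ge c\ln(4c/b_j)$ (or symmetrically). The weighted Ramsey lemma is then applied to the reduced graph coloured by $\chi(i,j)$, with each vertex carrying the \emph{pair} of weights $(r_j,b_j)$; it outputs either a red reduced clique with $\sum r_j=\Omega(\log d)$ or a blue one with $\sum b_j=\Omega(\log d)$, and in either case the matching-colour internal cliques glue together. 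Running ordinary Ramsey on a majority-based reduced colouring first, and hoping to fix colour mismatches afterward, does not reproduce this two-weight, colour-matching mechanism.
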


Ramsey's theorem continues to hold if we use more than $2$ colors. We define the Ramsey number $r(k; q)$ to be the minimum $n$ such that in every $q$-coloring of the edges of the 
complete graph $K_n$ there is a monochromatic $K_k$. The upper bound proof of Erd\H{o}s and Szekeres \cite{ES35} implies that $r(k;q) \leq q^{qk}$. On the other hand, a simple product 
coloring shows that, for $q$ even, $r(k;q) \geq r(k;2)^{q/2} \geq 2^{kq/4}$. Phrased differently, we see that any $q$-coloring of $K_n$ contains a monochromatic clique of size $c_q \log 
n$ and that this is, up to the constant, best possible.

It therefore makes sense to consider the function $f_q(n)$, defined now as the minimum over all $q$-colorings of the edges of the complete graph on $\{2, 3, \dots, n\}$ of the maximum 
weight of a monochromatic clique. However, as observed by R\"odl, the analogue of Erd\H{o}s' conjecture for three colors instead of two does not hold. 
Indeed, again cover the interval $[2,n]$ by $t=\lceil \log \log n \rceil$ intervals, where the $i$th interval is $[2^{2^{i-1}},2^{2^i})$. The edges inside the intervals are colored
red-blue as in the above construction and the edges between the intervals are colored green. Then the maximum weight of any red or blue clique is at most $4$, since any such clique must lie completely within one of the intervals, and the maximum weight of the green clique is at most $\sum_{i\geq 1} 2^{-i+1} \leq 2$.

\subsection{Ramsey's theorem with fixed order type}

We also consider another extension of Ramsey's theorem. For a positive integer $n$, let $[n]=\{1,\ldots,n\}$. Motivated by an application in model theory, Jouko V\"a\"an\"anen asked \cite{NeVa, Sh} whether, for any positive integers $k$ and $q$ and any permutation $\pi$ of $[k-1]$, there is a positive integer 
$R$ such that for any $q$-coloring of the edges of the complete graph on vertex set $[R]$ there is a monochromatic $K_k$ with vertices $a_1<\ldots<a_k$ satisfying 
$$a_{\pi(1)+1}-a_{\pi(1)}>a_{\pi(2)+1}-a_{\pi(2)}>\ldots>a_{\pi(k-1)+1}-a_{\pi(k-1)}.$$ 
That is, we not only want a monochromatic $K_k$, but the differences between consecutive vertices must satisfy a prescribed order. The least such positive integer $R$ is denoted by $R_{\pi}(k;q)$, and we let $R(k;q)=\max_{\pi} R_{\pi}(k;q)$, i.e., $R(k;q)$ is the maximum of $R_{\pi}(k;q)$ over all permutations $\pi$ of $[k-1]$. 

V\"a\"an\"anen's question was popularized by Joel Spencer. It was positively answered by Noga Alon and, independently, by Erd\H{o}s, Hajnal, and Pach \cite{EHP}. Alon's proof (see \cite{NeVa}) uses the Gallai-Witt theorem and gives a weak bound on $R(k;q)$. The proof by Erd\H{o}s, Hajnal, and Pach uses a compactness argument and gives no bound on $R(k;q)$. Later, Alon, Shelah and Stacey all independently found proofs giving tower-type bounds for $R(k;q)$, though these were never published (see \cite{Sh}). 

A natural conjecture, made by Alon (see \cite{Sh}), is that $R(k;q)$ should grow exponentially in $k$. For monotone sequences, this was confirmed by Alon and Spencer. A breakthrough on 
this problem was obtained by Shelah \cite{Sh}, who proved the double-exponential upper bound $R(k;q) \leq 2^{(q(k+1)^3)^{qk}}$. Here, we make further progress, showing that, for 
fixed $q$, $R(k;q)$ grows as a single exponential in a power of $k$.

\begin{theorem}\label{shelahorder} 
For any positive integers $k$ and $q$ and any permutation $\pi$ of $[k-1]$, every $q$-coloring of the edges of the complete graph on vertex set $[R]$ with $R=2^{k^{20q}}$ contains a monochromatic $K_k$ with vertices $a_1<\ldots<a_k$ satisfying 
$$a_{\pi(1)+1}-a_{\pi(1)}>a_{\pi(2)+1}-a_{\pi(2)}>\ldots>a_{\pi(k-1)+1}-a_{\pi(k-1)}.$$ 
That is, $R(k;q) \leq 2^{k^{20q}}$.
\end{theorem}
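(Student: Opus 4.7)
The plan is a multi-scale iterative argument. I would fix $k-1$ ``gap scales'' $L_1 > L_2 > \cdots > L_{k-1}$ that are geometrically well-separated, with $\log(L_j/L_{j+1})$ very large and $\log L_1 \le \log R = k^{20q}$. The scale $L_j$ will serve as the approximate size of the $j$-th largest of the $k-1$ consecutive gaps in the target clique; the gap $a_{\pi(j)+1}-a_{\pi(j)}$ will be forced into the window $[\tfrac{1}{2}L_j,\, 2L_j]$, and the geometric separation of the $L_j$ automatically enforces the required order $\pi$.

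First, I would iterate a Ramsey-plus-dependent-random-choice step through the scales $j=1,\ldots,k-1$ from coarsest to finest. At step $j$, I would partition the current ambient interval into consecutive blocks of length $L_j$ and define an auxiliary $q$-coloring on pairs of blocks (roughly, the majority color between them, or more robustly the color obtained from a dependent-random-choice step applied to the bipartite graph between the two blocks). Applying the multicolor Ramsey bound $r(t;q)\le q^{qt}$, I would extract a ``block-monochromatic'' sub-family of blocks and then, inside each selected block, pass to a large common neighborhood in the target color by another dependent-random-choice step. The invariant maintained across rounds is that any transversal of vertices, one per selected block, is monochromatic in a fixed target color.

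Second, after $k-1$ rounds, I would be left with a nested family of sub-blocks at every scale, and I would pick one representative per block, tuning the choice inside each block so that the gap at position $\pi(j)$ sits in the intended window $[\tfrac{1}{2}L_j, 2L_j]$. The invariant from round $j$ guarantees monochromaticity in a common color $c$, and the geometric separation of the $L_j$ guarantees the strict ordering of the $k-1$ gaps prescribed by $\pi$.

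The main obstacle is the size budget. Shelah's double-exponential bound arises precisely because the induced coloring of block pairs has up to $q^{(\text{block size})}$ ``types'', forcing Ramsey to be applied to an enormous number of colors and producing a tower loss. The essential improvement must come from keeping the effective number of types at each round polynomial in $k$ (for fixed $q$): this is exactly what dependent random choice buys, letting us replace ``Ramsey on $q^{k^{O(1)}}$ colors'' with ``Ramsey on $q$ colors at the cost of a polynomial shrinkage in set size''. Tuning the parameters so that each round costs only a $k^{O(q)}$ factor in the logarithm of the ambient size, and the product over the $k-1$ rounds stays within $2^{k^{20q}}$, is the core technical challenge.
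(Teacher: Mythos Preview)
Your proposal has a genuine gap: color coordination across scales. When you apply multicolor Ramsey at scale $j$ to the block-pair coloring, you extract a family monochromatic in some color $c_j$, but nothing prevents $c_1,c_2,\ldots,c_{k-1}$ from being distinct. The invariant you claim---that every transversal is monochromatic in a \emph{fixed} target color---requires $c_1=\cdots=c_{k-1}$, and the obvious way to force this (encode all $k-1$ scale-colors into one Ramsey instance on $q^{k-1}$ colors) is precisely the blowup that produces Shelah's double exponent. Dependent random choice alone does not fix this: once a color $c$ is committed at the top scale, the recursion into a sub-block must find the next separated pair dense \emph{in that same color $c$}, and there is no reason color $c$ remains dense at the finer scale inside the sub-block you landed in.

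The paper's resolution is to swap the two loops. The outer iteration runs over the $q$ colors, not the $k-1$ scales. One formulates a robust density condition (``$(\alpha,\beta,\gamma,\delta,p)$-heavy'') on the graph of a single color $i$: heavy means every large, dense-in-an-interval subset contains a separated pair of large dense pieces with color-$i$ edge density at least $p=1/q$. A key lemma shows---by induction on the clique size, handling the largest gap first and using dependent random choice on the separated pair, which is close to your inner picture but carried out entirely within one fixed color---that if color $i$ is heavy then the color-$i$ graph already contains a clique of type $\pi$. If color $i$ is not heavy, one passes to a witnessing subset $S_i\subset S_{i-1}$ on which color $i$ fails the separated-pair condition. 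After $q$ rounds one reaches $S_q$ in which no color admits a good separated pair; but a simple splitting lemma produces a separated pair in $S_q$, and pigeonhole over the $q$ colors gives the contradiction. Only $q$ rounds of shrinkage occur, each costing a factor polynomial in $k$ in the exponent, which is what keeps the final bound singly exponential.
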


Common to the proofs of both Theorems \ref{erdosrodl} and \ref{shelahorder} is a simple, yet powerful lemma whose proof, which we present in the next section, uses a probabilistic argument 
known as dependent random choice. Early versions of this technique were developed in the papers \cite{G98, KR01, Su03}. Several variants have since been discovered and applied to various problems in 
Ramsey theory and extremal graph theory (see the survey \cite{FS10} for more details).

{\bf Organization of the paper.}
We prove Theorem \ref{erdosrodl} in Section \ref{sect3} and Theorem \ref{shelahorder} in Section \ref{sect4}. In Section \ref{sect3}, we make use of a weighted variant of Ramsey's theorem, Lemma \ref{WeightedRamsey}, which may be of independent interest. In Section \ref{sect6}, we make several additional related remarks. These include discussing the asymptotic behavior of $f(n)$, considering what happens for other weight functions, showing that some natural variants of both problems have simple counterexamples, and presenting a simple coloring that gives a lower bound on Ramsey numbers for cliques with increasing consecutive differences. All logarithms are base $2$ unless otherwise indicated. For the sake of clarity of presentation, we systematically omit floor and ceiling signs whenever they are not crucial. We also do not make any serious attempt to optimize absolute constants in our statements and proofs. 

\section{Dependent Random Choice} 

The following lemma shows that every dense graph contains a large vertex subset $U$ such that every small subset $S \subset U$ has many common neighbors. For a vertex $v$ in a graph, let $N(v)$ denote the set of neighbors of $v$. For a set $T$ of vertices, let $N(T)$ denote the set of common neighbors of $T$.

\begin{lemma}\label{drc}
Suppose $p>0$ and $s$, $t$, $N_1$, $N_2$ are positive integers satisfying 
${N_1 \choose s}(m/N_2)^t \leq p^tN_1/2.$ If $G=(V_1,V_2,E)$ is a bipartite graph with $|V_i|=N_i$ for $i=1,2$ and at least $pN_1N_2$ edges, then $G$ has a vertex subset $U \subset V_1$ such that $|U| \geq p^tN_1/2$ and every $s$ vertices in $U$ have at least $m$ common neighbors. 
\end{lemma}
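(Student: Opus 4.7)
The plan is to use the standard dependent random choice sampling argument, adapted to the bipartite setting. I would sample a multiset $T = \{v_1, \ldots, v_t\}$ of $t$ vertices from $V_2$, chosen independently and uniformly at random with replacement, and then let $A = N(T) \subseteq V_1$ be their common neighborhood. The hope is that $A$ is typically large (so it will contain a sizable candidate for $U$) while simultaneously almost all $s$-subsets of $A$ have many common neighbors.

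First I would lower-bound $\mathbb{E}[|A|]$. A vertex $v \in V_1$ lies in $A$ exactly when every $v_i$ is a neighbor of $v$, so $\Pr[v \in A] = (d(v)/N_2)^t$, where $d(v) = |N(v)|$. Summing over $v \in V_1$ and invoking convexity of $x \mapsto x^t$ together with $\sum_v d(v) \geq pN_1 N_2$, I would conclude that
\[
\mathbb{E}[|A|] = \sum_{v \in V_1}\left(\frac{d(v)}{N_2}\right)^t \geq N_1\left(\frac{1}{N_1}\sum_{v \in V_1}\frac{d(v)}{N_2}\right)^t \geq p^t N_1.
\]
Next, I would call an $s$-subset $S \subseteq V_1$ \emph{bad} if $|N(S)| < m$, and let $Y$ count the number of bad $S$ contained in $A$. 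An $s$-set $S$ lies in $A$ precisely when each $v_i \in N(S)$, which has probability $(|N(S)|/N_2)^t$, so for bad $S$ this is at most $(m/N_2)^t$. Summing over all $\binom{N_1}{s}$ possible $s$-sets gives $\mathbb{E}[Y] \leq \binom{N_1}{s}(m/N_2)^t$, which by hypothesis is at most $p^t N_1/2$.

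Combining these, $\mathbb{E}[|A| - Y] \geq p^t N_1/2$, so there exists a choice of $T$ for which $|A| - Y \geq p^t N_1/2$. Fix such a $T$, and obtain $U$ from $A$ by deleting one vertex from each bad $s$-set contained in $A$; then $|U| \geq p^t N_1/2$ and $U$ contains no bad $s$-set, i.e., every $s$ vertices of $U$ have at least $m$ common neighbors, as required. The only delicate step is the expectation bound for $|A|$, which needs the convexity inequality applied correctly; the rest is bookkeeping driven entirely by the hypothesized inequality on $\binom{N_1}{s}(m/N_2)^t$.
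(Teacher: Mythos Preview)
Your proposal is correct and essentially identical to the paper's proof: both sample $t$ vertices uniformly with repetition from $V_2$, lower-bound the expected size of their common neighborhood in $V_1$ via convexity, upper-bound the expected number of bad $s$-subsets by $\binom{N_1}{s}(m/N_2)^t$, and then delete one vertex from each surviving bad set. The only cosmetic difference is notation ($A$ versus $W$).
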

\begin{proof}
Consider a set $T$ of $t$ vertices in $V_2$ picked uniformly at random with repetition. Let $W=N(T)$ and $X$ denote the cardinality of $W$. We have 
$$\mathbb{E}[X]=\sum_{v \in V_1}\left(\frac{|N(v)|}{N_2}\right)^t=N_2^{-t}\sum_{v \in V_1} |N(v)|^t \geq N_1 N_2^{-t}\left(\frac{\sum_{v\in V_1} |N(v)|}{N_1}\right)^t \geq p^tN_1,$$
where the second to last inequality is by Jensen's inequality applied to the convex function $f(z)=z^t$. 

Let $Y$ be the random variable which counts the number of subsets $S \subset W$ of size $s$ with fewer than $m$ common neighbors. For a given $S \subset V_1$, the probability that it is a subset of $W$ equals $\left(|N(S)|/N_2\right)^t$. Since there are at most ${N_1 \choose s}$ such sets, it follows that 
$$\mathbb{E}[Y] \leq {N_1 \choose s}\left(\frac{m}{N_2}\right)^t.$$ 

By linearity of expectation, $$\mathbb{E}[X-Y]=\mathbb{E}[X]-\mathbb{E}[Y] \geq p^tN_1 - {N_1 \choose s}\left(\frac{m}{N_2}\right)^t \geq p^tN_1/2,$$ where the last inequality 
uses the assumption of the lemma. 
Hence, there is a choice of $T$ such that the corresponding set $W$ satisfies $X-Y \geq p^tN_1/2$. Delete one vertex from each subset $S$ of $W$ of size $s$ with fewer than $m$ common neighbors. We let $U$ be the remaining subset of $W$. We have $|U| \geq X-Y \geq p^tN_1/2$ and all subsets of size $s$ have at least $m$ common neighbors. 
\end{proof}

\section{Monochromatic cliques of large weight}\label{sect3}

The off-diagonal Ramsey number is the smallest natural number $n$ such that any red-blue edge-coloring of $K_n$ contains either a red copy of $K_s$ or a blue copy of $K_t$. The Erd\H{o}s-Szekeres bound for Ramsey numbers says that for any $s, t \geq 2$,
\[r(s,t) \leq \binom{s+t-2}{s-1}.\]
Note that this implies $r(s,t) \leq 2^{s+t}$ and hence that every $2$-coloring of $K_n$ contains a monochromatic clique of order $\frac{1}{2}\log n$. 
The following lemma is a further simple consequence of this formula. Note that here and throughout the rest of this section we will use the natural logarithm $\ln$ as well as the log base $2$.

\begin{lemma} \label{ErdSzek}
Suppose $0 < a \leq \frac{1}{4}$. Then, every $2$-coloring of the edges of $K_n$ contains either a red clique of order $a \ln n$ or a blue clique of order $e^{\frac{1}{4a}} \ln n$.
\end{lemma}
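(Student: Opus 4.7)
The plan is to invoke the Erdős--Szekeres bound $r(s,t)\leq \binom{s+t-2}{s-1}$ stated just before the lemma, with $s=\lceil a\ln n\rceil$ and $t=\lceil e^{1/(4a)}\ln n\rceil$, and verify that $\binom{s+t-2}{s-1}\leq n$. Once this is shown, any 2-coloring of $K_n$ must contain a red $K_s$ or a blue $K_t$, which is exactly the statement.

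To do the bookkeeping, I will use the standard estimate $\binom{N}{k}\leq (eN/k)^k$, so
$$\binom{s+t-2}{s-1}\leq \binom{s+t}{s}\leq \left(\frac{e(s+t)}{s}\right)^{s}.$$
Taking logs, it suffices to prove $s\ln\!\bigl(e(s+t)/s\bigr)\leq \ln n$. Plugging in the definitions of $s$ and $t$, this becomes (up to negligible rounding) the statement
$$a\ln n\cdot\Bigl(1+\ln\bigl(1+e^{1/(4a)}/a\bigr)\Bigr)\leq \ln n.$$
Since $e^{1/(4a)}/a\geq 4e>1$ for $a\leq 1/4$, I will apply $\ln(1+x)\leq \ln x+1/x$ to the inner logarithm. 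After dividing by $\ln n$, this reduces the problem to the single-variable inequality
$$f(a):=a+\tfrac{1}{4}-a\ln a+a^{2}e^{-1/(4a)}\leq 1\qquad\text{for all }0<a\leq \tfrac14.$$

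The remaining step is purely analytic: differentiating gives $f'(a)=-\ln a+(2a+1/4)e^{-1/(4a)}$, which is positive on $(0,1/4]$, so $f$ is increasing there and attains its maximum at $a=1/4$. A direct evaluation gives $f(1/4)=\tfrac12+\tfrac12\ln 2+\tfrac{1}{16e}\approx 0.87<1$, which closes the estimate.

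The main obstacle is that this final inequality is not wasteful — the bound $f(1/4)\approx 0.87$ leaves only a small margin below $1$ — so any cruder estimate (for example, bounding $s+t\leq 2t$ at the start, or using $r(s,t)\leq 2^{s+t}$ instead of the binomial form) fails. Thus the proof hinges on using the sharp Erdős--Szekeres binomial bound together with the refined estimate $\ln(1+x)\leq \ln x+1/x$ rather than $\ln(1+x)\leq \ln(2x)$; everything else is routine.
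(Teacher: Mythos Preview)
Your proof is correct and follows essentially the same route as the paper: both apply the Erd\H{o}s--Szekeres bound in the form $r(s,t)\leq\binom{s+t}{s}\leq\bigl(e(s+t)/s\bigr)^{s}$ and then verify a single-variable inequality in $a$. The only difference is organizational: rather than plugging in $t=e^{1/(4a)}\ln n$ and checking $(e(s+t)/s)^s\leq n$ as you do, the paper sets $t=a(e^{1/a-1}-1)\ln n$ so that $(e(s+t)/s)^s=n$ holds \emph{exactly}, and then observes that $a(e^{1/a-1}-1)\geq e^{1/(4a)}$ for $0<a\leq 1/4$, which avoids your calculus step.
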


\begin{proof}
From the Erd\H{o}s-Szekeres bound, we have
\[r(s, t) \leq \binom{s+t}{s} \leq \left(\frac{e(s+t)}{s}\right)^s.\]
Applying this with $s = a \ln n$ and $t = a (e^{\frac{1}{a} - 1} - 1) \ln n$ tells us that, since
\[\left(\frac{e(a \ln n + a (e^{\frac{1}{a} - 1} - 1) \ln n)}{a \ln n}\right)^{a \ln n} = e^{\ln n} = n,\]
there is either a red clique of order $s$ or a blue clique of order $t$. For $0<a \leq \frac{1}{4}$, we have 
\[a(e^{\frac{1}{a} - 1} - 1) \geq e^{\frac{1}{4a}}.\]
The result follows.
\end{proof}

We would now like to prove a weighted version of Ramsey's theorem. The set-up is that each vertex $v$ is given two weights $r_v$ and $b_v$ which are balanced in a certain sense. We would then like to show that it is possible to find a red clique $K$ or a blue clique $L$ for which either the sum of $r_v$ over the vertices of $K$ or the sum of $b_v$ over the vertices of $L$ is large. 

\begin{lemma} \label{WeightedRamsey}
Suppose that the edges of $K_n$ have been two-colored in red and blue and that each vertex $v$ has been given positive weights $r_v$ and $b_v$ satisfying $b_v \geq \ln (4/r_v)$ if $r_v \leq b_v$ and $r_v \geq \ln (4/b_v)$ if $b_v \leq r_v$. Then there exists either a red clique $K$ for which $\sum_{v \in K} r_v \geq \frac{1}{2} \ln n$ or a blue clique $L$ for which $\sum_{v \in L} b_v \geq \frac{1}{2} \ln n$.
\end{lemma}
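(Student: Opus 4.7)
My approach would be induction on $n$ via a slightly more flexible claim: for all $k, l \geq 0$ and every $2$-coloring of $K_n$ with weights satisfying the hypotheses of the lemma, if $n \geq e^{k+l}$ then there is either a red clique of $r$-weight at least $k$ or a blue clique of $b$-weight at least $l$. Setting $k = l = \frac{1}{2}\ln n$ then recovers the lemma statement.

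For the inductive step, I would first handle the trivial case where some vertex $v$ has $r_v \geq k$ or $b_v \geq l$, in which case $\{v\}$ alone suffices. Otherwise, I would look for a vertex $v$ along which we can make recursive progress: either $|R_v| \geq n e^{-r_v}$, so that the inductive hypothesis applied to $R_v$ with targets $(k - r_v, l)$ returns either a red clique we extend by $v$ or a blue clique we are already done with; or, symmetrically, $|B_v| \geq n e^{-b_v}$. Such a $v$ fails to exist precisely when every vertex satisfies $e^{-r_v} + e^{-b_v} > 1 - 1/n$.

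To analyse this ``stuck'' scenario, the hypothesized balance gives $e^{-\max(r_v, b_v)} \leq \min(r_v, b_v)/4$, so $e^{-r_v} + e^{-b_v} \leq f(\min(r_v, b_v))$ where $f(x) = e^{-x} + x/4$. A short calculus computation shows that $f(x) < 1$ for $x$ in an open interval $(0, r^{\ast})$ with $3 < r^{\ast} < 4$, and $f(x) \geq 1$ otherwise. Consequently every vertex must have $\min(r_v, b_v)$ either at most roughly $4/(3n)$ or at least $r^{\ast}$. In the first case the balance assumption forces $\max(r_v, b_v) \geq \ln(3n) > k + l$, so a single vertex already beats the target in the appropriate color; in the second case every weight is at least $r^{\ast}$, and the ordinary Erd\H{o}s--Szekeres bound $r(s, t) \leq \binom{s + t}{s}$ applied with $s = \lceil k/r^{\ast} \rceil$ and $t = \lceil l/r^{\ast} \rceil$ delivers the required clique because $r^{\ast}$ is comfortably larger than $\ln 2$.

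The main obstacle I anticipate is aligning the quantitative thresholds cleanly: in particular, checking that the constant $r^{\ast}$ extracted from $f$ really is $\geq \ln 2$ (so the final Erd\H{o}s--Szekeres estimate closes) and that the narrow small-$\min$ interval of order $1/n$ is indeed narrow enough that balance pushes the complementary weight past $k + l$. A small base case for $n$ of order at most $e^{2.4}$, where the elementary inequality $\max(r_v, b_v) \geq 1.2$ available from the hypothesis already dominates $\frac{1}{2}\ln n$, completes the induction.
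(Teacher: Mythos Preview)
Your inductive framework matches the paper's: both prove (equivalently) that the maximum red $r$-weight plus the maximum blue $b$-weight is at least $\ln n$, and both recurse into a colored neighborhood of a chosen vertex. The difference is that your ``stuck scenario'' is in fact vacuous, so the Erd\H{o}s--Szekeres branch is never reached. The paper simply picks an arbitrary vertex $v$ with, say, $r_v \geq b_v$; then $r_v \geq 1$ and (assuming $r_v \leq \ln n$, else a singleton already wins) $b_v \geq 4e^{-r_v} \geq 4/n$. If the red side is too small, i.e.\ $\alpha := |R_v|/n < e^{-r_v} \leq b_v/4$, then with $\beta := \alpha + 1/n$ one has $\beta < b_v/2$ and $\beta < 1/e + 1/3 < 3/4$, whence $b_v + \ln(1-\beta) > 2\beta + \ln(1-\beta) \geq 0$, i.e.\ $|B_v| = (1-\beta)n \geq ne^{-b_v}$, and the blue recursion fires after all. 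Thus for every vertex at least one of your two inequalities $|R_v|\geq ne^{-r_v}$, $|B_v|\geq ne^{-b_v}$ holds automatically, and Case~C cannot occur.

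Your stuck-scenario analysis can be made to work (the dichotomy is really between the two roots $a_n,b_n$ of $f(x)=1-1/n$ rather than $4/(3n)$ and $r^\ast$, and the lower bound feeding Erd\H{o}s--Szekeres must be $b_n$, which one checks is $\geq 2$ for $n\geq 3$, not $r^\ast$), but it is a detour the paper replaces with the two-line estimate above.
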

\begin{proof}
Let $w(n)$ be the infimum, over all red-blue edge-colorings of $K_n$, for the sum of the maximum of $\sum_{v \in K}r_v$ over all red cliques $K$ and the maximum of $\sum_{v \in 
L}b_v$ over all blue cliques $L$. We will show by induction on $n$ that $w(n) \geq \ln n$. This clearly implies the desired bound. 

The base cases $n=1,2$ clearly hold. Suppose, therefore, that $n \geq 3$ and that $w(n') \geq \ln n'$ for all positive integers $n' < n$. 

Consider a red-blue edge-coloring of $K_n$, and let $w$ be the sum of the maximum of $\sum_{v \in K}r_v$ over all red cliques $K$ and the maximum of $\sum_{v \in L}b_v$ over all blue cliques $L$. It suffices to show that $w \geq \ln n$. 

Let $v$ be a vertex in $K_n$. By symmetry, we may suppose without loss of generality that $r_v \geq b_v$. Since $r_v \geq \ln (4/b_v)$ and $r_v \geq b_v$, we have $r_v \geq 1$.  We may assume $r_v \leq \ln n$ as otherwise we could pick the red clique $K$ to consist of just the vertex $v$. Hence, $b_v \geq 4/n$.

Let $R$ be the set of red neighbors of $v$ and $B$ be the set of blue neighbors of $v$, so $|R|+|B|=n-1$. Let $\alpha=|R|/n$. We can add $v$ to the largest red clique in $R$ in terms of weight, and thus $w \geq r_v+w(\alpha n) \geq r_v+\ln(\alpha n) \geq r_v+\ln \alpha+\ln n$. We may assume $r_v+\ln \alpha<0$, as otherwise we are done. So $\alpha<e^{-r_v} \leq \frac{b_v}{4}$. From $r_v \geq 1$, we have $\alpha < 1/e$. From the above lower bounds on $b_v$, we have $b_v \geq 4 \max(\alpha,\frac{1}{n}) \geq 2\beta$, where $\beta=\alpha+\frac{1}{n}<\frac{1}{e}+\frac{1}{3}<3/4$. We can add $v$ to the largest blue clique in $B$ in terms of weight, and thus 
$$w \geq b_v+w(|B|) \geq b_v+\ln \left(1-\alpha-\frac{1}{n}\right) + \ln n \geq 2\beta+\ln \left(1-\beta\right) + \ln n \geq \ln n,$$ 
where we used $0<\beta<3/4$, which completes the proof. 
\end{proof}

Scaling all weights by a factor $c>0$, we have the following equivalent version. 

\begin{lemma} \label{WeightedRamsey2} Let $c>0$. 
Suppose that the edges of $K_n$ have been two-colored in red and blue and that each vertex $v$ has been given positive weights $r_v$ and $b_v$ satisfying $b_v \geq c\ln (4c/r_v)$ if $r_v \leq b_v$ and $r_v \geq c\ln (4c/b_v)$ if $b_v \leq r_v$. Then there exists either a red clique $K$ for which $\sum_{v \in K} r_v \geq \frac{c}{2} \ln n$ or a blue clique $L$ for which $\sum_{v \in L} b_v \geq \frac{c}{2} \ln n$.
\end{lemma}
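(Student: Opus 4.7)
The plan is to deduce Lemma \ref{WeightedRamsey2} from Lemma \ref{WeightedRamsey} by a direct rescaling, which is exactly what the introductory sentence suggests. The statement of Lemma \ref{WeightedRamsey2} is invariant under simultaneously dividing all weights by $c$ and dividing the target weight threshold by $c$, and the only subtlety is checking that the hypothesis translates cleanly into the hypothesis of Lemma \ref{WeightedRamsey}.

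Concretely, given a $2$-coloring of $K_n$ with weights $(r_v, b_v)$ satisfying the hypothesis of Lemma \ref{WeightedRamsey2}, I would define new weights $r'_v = r_v/c$ and $b'_v = b_v/c$. Note that the comparisons $r'_v \leq b'_v$ and $r_v \leq b_v$ are equivalent. In the case $r_v \leq b_v$, the given inequality $b_v \geq c \ln(4c/r_v)$ becomes, after dividing by $c$,
\[
b'_v \;\geq\; \ln\!\left(\frac{4c}{r_v}\right) \;=\; \ln\!\left(\frac{4}{r'_v}\right),
\]
which is exactly the hypothesis of Lemma \ref{WeightedRamsey} for the primed weights. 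The case $b_v \leq r_v$ is symmetric. Hence the primed weights satisfy the hypotheses of Lemma \ref{WeightedRamsey} on the same colored $K_n$.

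Applying Lemma \ref{WeightedRamsey}, we obtain either a red clique $K$ with $\sum_{v \in K} r'_v \geq \tfrac{1}{2} \ln n$ or a blue clique $L$ with $\sum_{v \in L} b'_v \geq \tfrac{1}{2} \ln n$. Multiplying through by $c$ gives $\sum_{v \in K} r_v \geq \tfrac{c}{2}\ln n$ or $\sum_{v \in L} b_v \geq \tfrac{c}{2}\ln n$, which is the required conclusion. There is no real obstacle here: the only thing to watch is that the factor of $c$ inside the logarithm on the right-hand side of the hypothesis is arranged precisely so that it cancels with the $c$ produced by substituting $r_v = c r'_v$, so that the threshold $4$ inside the logarithm is preserved; this is the one small calculation to keep track of.
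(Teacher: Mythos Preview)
Your proposal is correct and is exactly the approach the paper takes: the paper simply states that Lemma \ref{WeightedRamsey2} follows from Lemma \ref{WeightedRamsey} by scaling all weights by a factor $c$, and your write-up spells out precisely that rescaling and verifies that the hypothesis transforms correctly.
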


Before proving Theorem \ref{erdosrodl}, we sketch the proof. We begin with a collection of $d = O(\sqrt{\log \log n})$ disjoint subsets $S_{1,0}, S_{2,0}, \dots, S_{d,0}$ of $[2,n]$. We then find a sequence $T_{1,2} \supseteq T_{1,3} \supseteq \dots \supseteq T_{1,d}$ of subsets of $S_{1,0}$, where $T_{1,k}$ has the property that there exists a color $\chi(1, k)$, either red or blue, such that all small subsets of $T_{1,k}$ have many common neighbors in $S_{k,0}$ in that color. 

We now find maximum sized red and blue cliques in $T_{1,d}$. If either of these cliques is sufficiently large then that clique will be the desired clique for the whole theorem. We may therefore assume that they are both small. We let $S_{1,1} \subseteq T_{1,d} \subseteq S_{1,0}$ be their union. The fundamental property of $T_{1,d}$ now implies that, for $2 \leq k \leq d$, the set $S_{1,1}$ has many common neighbors in color $\chi(1,k)$ in $S_{k,0}$. We call this set of neighbors $S_{k,1}$. 

We now repeat the entire process, first within $S_{2,1}$ to find sets $S_{k, 2}$ for $2 \leq k \leq d$, then in $S_{3,2}$ to find sets $S_{k,3}$ for $3 \leq k \leq d$, and so on. In the end, we find subsets $S_{i,i}$ such that each $S_{i,i}$ is the union of a red and blue clique and the color of the edges between $S_{i,i}$ and $S_{j,j}$ only depends on $i$ and $j$. If we consider the reduced graph on $d$ vertices whose edges have color $\chi(i,j)$ and whose vertices have blue and red weights representing the sizes of the red and blue cliques in $S_{i,i}$, an application of Lemma \ref{WeightedRamsey2} allows us to complete the proof.

We are now ready to prove Theorem \ref{erdosrodl}, which we restate for convenience.

\begin{theorem} \label{RodlMain}
For sufficiently large $n$, in every red-blue edge-coloring of the complete graph on the interval $\{2, \dots, n\}$ there is a monochromatic clique with vertex set $I$ such that
\[\sum_{i \in I} \frac{1}{\log i} \geq 2^{-8}\log \log \log n.\]
\end{theorem}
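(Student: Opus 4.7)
The plan is to follow the iterative dependent-random-choice scheme outlined in the paragraph preceding the theorem, and then finish by applying the weighted Ramsey bound of Lemma~\ref{WeightedRamsey2} to a reduced graph on roughly $\sqrt{\log\log n}$ super-vertices.

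First, I set $d=\Theta(\sqrt{\log\log n})$ and pick disjoint blocks $S_{k,0}\subseteq[2^{2^{k-1}},2^{2^k})$ for $k=1,\ldots,d$. Every $v\in S_{k,0}$ then satisfies $\log v\asymp 2^{k-1}$, so the weight of any clique sitting inside $S_{k,0}$ is essentially its cardinality times $2^{1-k}$. This is the quantity I will control.

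Next, for each row $i=1,2,\ldots,d$ in order, I apply Lemma~\ref{drc} repeatedly to the majority-color bipartite subgraph between $S_{i,i-1}$ and $S_{k,i-1}$, for each $k=i+1,\ldots,d$, to produce a decreasing chain $T_{i,i+1}\supseteq\cdots\supseteq T_{i,d}$ inside $S_{i,i-1}$. The chain is built so that for every $k>i$ there is a fixed color $\chi(i,k)\in\{\mathrm{red},\mathrm{blue}\}$ with the property that every small subset of $T_{i,d}$ has a prescribed large number of common neighbors in color $\chi(i,k)$ inside $S_{k,i-1}$. I then use Lemma~\ref{ErdSzek} to extract a maximum red clique $R_i$ and a maximum blue clique $B_i$ inside $T_{i,d}$. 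If either clique already has weight at least $2^{-8}\log\log\log n$ the theorem is proved; otherwise both $|R_i|$ and $|B_i|$ are small, so $S_{i,i}:=R_i\cup B_i$ lies below the ``small subset'' threshold of Lemma~\ref{drc}, allowing me to set $S_{k,i}\subseteq S_{k,i-1}$ to be the common color-$\chi(i,k)$ neighborhood of $S_{i,i}$, still large by construction.

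After all $d$ rows I have sets $S_{1,1},\ldots,S_{d,d}$ such that each $S_{i,i}=R_i\sqcup B_i$ is the vertex-disjoint union of a red clique and a blue clique and, crucially, every edge between a vertex of $S_{i,i}$ and a vertex of $S_{j,j}$ is colored $\chi(i,j)$ whenever $i<j$. This is a two-colored $K_d$ on super-vertices, and if I assign vertex $i$ the weights $r_i:=|R_i|\cdot 2^{1-i}$ and $b_i:=|B_i|\cdot 2^{1-i}$, any monochromatic clique of large weight in the reduced coloring lifts to one of the same weight in the original coloring. The bound on this reduced weight will come from Lemma~\ref{WeightedRamsey2}.

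The main obstacle is parameter bookkeeping. I must choose the clique thresholds in each call of Lemma~\ref{ErdSzek} and the common-neighborhood parameters in each call of Lemma~\ref{drc} so that (i) after $O(d^2)=O(\log\log n)$ dependent-random-choice rounds the sets $S_{k,i}$ are still large enough to support the next round, and (ii) the resulting weights $(r_i,b_i)$ satisfy the compatibility relation $b_i\ge c\ln(4c/r_i)$ (and its symmetric counterpart) required by Lemma~\ref{WeightedRamsey2}. The asymmetric Erd\H{o}s--Szekeres bound of Lemma~\ref{ErdSzek}, which trades a red clique of order $a\ln N$ for a blue clique of order $e^{1/4a}\ln N$, is exactly tailored to (ii): the exponential trade-off matches the logarithmic compatibility condition on the vertex weights. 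Assuming this bookkeeping goes through, Lemma~\ref{WeightedRamsey2} produces a monochromatic reduced clique of weight $\Omega(\ln d)=\Omega(\log\log\log n)$, which is the claimed bound up to the explicit constant $2^{-8}$.
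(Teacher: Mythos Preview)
Your overall strategy is exactly the paper's: build a reduced $2$-coloured $K_d$ on $d\approx\sqrt{\log\log n}$ super-vertices via iterated dependent random choice, equip each super-vertex with red and blue weights coming from Lemma~\ref{ErdSzek}, and finish with Lemma~\ref{WeightedRamsey2}. The observation that the exponential trade-off in Lemma~\ref{ErdSzek} matches the logarithmic compatibility condition in Lemma~\ref{WeightedRamsey2} is also correct and is precisely how the paper links the two lemmas.

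However, your choice of blocks $S_{k,0}\subseteq[2^{2^{k-1}},2^{2^k})$ for $k=1,\dots,d$ is a genuine error, and with it the bookkeeping you defer cannot be made to work. In the DRC call from row $i$ into column $k$, the hypothesis of Lemma~\ref{drc} forces roughly $N_1^s\le (N_2/m)^t$, i.e.\ $s\log N_1 \lesssim t\,\epsilon\log N_2$ where $m=N_2^{1-\epsilon}$. Here $s=|R_i|+|B_i|$ is (below the abort threshold) of order $\log|S_i|\cdot\log\log\log n$, while $t$ is bounded by about $\log|S_i|/d$ so that $d$ successive DRC rounds do not annihilate $T_{i,d}$. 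Combining, $\epsilon\gtrsim d\,\log\log\log n\cdot\log|S_i|/\log|S_k|$. With your dyadic shells $\log|S_i|\asymp 2^i$, so for the adjacent column $k=i+1$ one gets $\epsilon\gtrsim \tfrac12\, d\,\log\log\log n\gg 1$: the required common-neighbourhood size $m=N_2^{1-\epsilon}$ drops below $1$ and the step fails. Summing the exponent losses over the rows that shrink a fixed column $S_k$ gives a total of order $d\,\log\log\log n$, so every $S_{k,k-1}$ collapses long before you reach it.

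The paper repairs this by spacing the blocks much further apart: it takes $S_i=[n_i,2n_i)$ with $\log\log n_i = i\sqrt{\log\log n}+\tfrac12\log\log n$, so that $\log n_{k}/\log n_{i}\ge 2^{\sqrt{\log\log n}}$ whenever $k>i$. This super-polynomial ratio, not the mere squaring $\log|S_{k+1}|\approx 2\log|S_k|$ you get from consecutive dyadic shells, is what makes the exponent losses summable. The paper then tracks $|S_{i,j}|\ge n_i^{1-j/(2i)}$ and $|T_{j+1,k}|\ge n_{j+1}^{1/2-k/(4d)}$, and the key inequality $n_{k+1}\ge (2n_{j+1})^{16\log\log n\,\log\log\log n}$ used to verify the DRC hypothesis is exactly what the wider spacing buys. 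With this single correction to the placement of the $S_{k,0}$, your outline becomes the paper's proof.
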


\begin{proof}
Let $d=\frac{1}{2}\sqrt{\log \log n}-1$ and $c=1/4$. For $i = 1, \dots, d$, 
let $S_i=\{n_i, n_i+1, \ldots, 2n_i-1\}$ be the interval of size $n_i$ beginning at the integer $n_i$, 
where $\log \log n_i = i \sqrt{\log \log n} + \frac{1}{2} \log
\log n$. For each $j = 0, 1, \dots, d$, we will find, by induction, a collection of sets $S_{i,j}$ for $i \geq j$ such 
that $S_{i,j} \subseteq S_{i,j-1} \subseteq \dots \subseteq S_{i,0} = S_i$ and 

\begin{itemize}

\item
the set $S_{j,j}$ is the union of two monochromatic cliques, one in red of order $\frac{1}{4} r_j \log n_j$ and the other in blue of order $\frac{1}{4} b_j \log n_j$, where $r_j \geq c\ln (4c/b_j)$ if $b_j \leq r_j$ and $b_j \geq c\ln(4c/r_j)$ if $r_j \leq b_j$;

\item
for each $i > j$, the set $S_{i,j}$ satisfies $|S_{i,j}| \geq n_i^{1 - \frac{j}{2i}}$;

\item
for each $i > j$, there exists a color $\chi(j, i)$ such that every vertex in $S_{j, j}$ is connected to every vertex in $S_{i, j}$ by an edge with color $\chi(j, i)$.

\end{itemize} 

To begin the induction, we let $S_{i,0} = S_i$ for each $i$. The required conclusion then holds trivially for $j=0$. Suppose therefore that the result holds for $j$. We will prove it also holds for $j+1$. 

For each $i \geq j+1$, we will find a subset $S_{i,j+1}$ of $S_{i,j}$ satisfying the conditions. To do this we apply another induction, finding for each $j+1 \leq k \leq d$, a subset $T_{j+1, k}$ of $S_{j+1,j}$ such that $T_{j+1, d} \subseteq T_{j+1, d-1} \subseteq \dots \subseteq T_{j+1, j+1} = S_{j+1,j}$ and

\begin{itemize}
\item
$|T_{j+1, k}| \geq n_{j+1}^{\frac{1}{2} - \frac{k}{4d}}$;

\item
for every $j+1 < i \leq k$, there is a color $\chi(j+1, i)$ such that every collection of $\log n_{j+1} \log \log \log n$ vertices in $T_{j+1, k}$ have at least $n_i^{1 - 
\frac{j+1}{2i}}$ common neighbors in color $\chi(j+1, i)$ in the set $S_{i,j}$.

\end{itemize}

Once this induction is complete, we consider $T_{j+1,d}$. Let $\frac{1}{4}r_{j+1}\ln n_{j+1}$ and $\frac{1}{4}b_{j+1}\ln n_{j+1}$ denote the orders of the largest red clique and the largest blue clique, respectively, in $T_{j+1,d}$. Since $|T_{j+1,d}| \geq n_{j+1}^{1/4}$, the remark before Lemma \ref{ErdSzek} implies that if $r_{j+1} \geq b_{j+1}$ then $T_{j+1,d}$ contains a red clique of order $\frac{1}{2} \log |T_{j+1,d}| \geq \frac{1}{8} \log n_{j+1}$. That is, $r_{j+1} \geq \frac{1}{2}$. Using Lemma \ref{ErdSzek}, we see that if $r_{j+1} \geq b_{j+1}$ then either $b_{j+1} \geq 1/4$ and $r_{j+1} \geq 1/2 \geq c\ln \left(\frac{4c}{b_{j+1}}\right)$ or $b_{j+1} < 1/4$ and $r_{j+1} \geq e^{1/(4b_{j+1})} \geq c\ln\left(\frac{4c}{b_{j+1}}\right)$. Similarly, if 
$b_{j+1} \geq r_{j+1}$, then $b_{j+1} \geq c\ln \left(\frac{4c}{r_{j+1}}\right)$. 

Note that we may assume that $r_{j+1}$ and $b_{j+1}$ are each less than $\frac{1}{2} \log \log \log n$. Suppose otherwise and that $r_{j+1} \geq \frac{1}{2} \log \log \log n$. Let $R_{j+1}$ be the red clique of order $\frac{1}{4}r_{j+1}\log n_{j+1}$. Then
\[\sum_{i \in R_{j+1}} \frac{1}{\log i} \geq \frac{1}{4} r_{j+1} \ln n_{j+1} \frac{1}{\log 2 n_{j+1}} \geq \frac{1}{16} \log \log \log n,\]
so we would be done.

Let $S_{j+1, j+1}$ be the union of the largest red and blue cliques in $T_{j+1,d}$. Note that $r_{j+1} + b_{j+1} \leq \log \log \log n$. 
Hence, $|S_{j+1, j+1}| \leq \log n_{j+1} \log \log \log n$ and therefore, for every $j+1 < i \leq d$, the collection 
of vertices in $S_{j+1, j+1}$ has at least $n_i^{1 - \frac{j+1}{2i}}$ common neighbors, in color $\chi(j+1, i)$, in $S_{i, j}$. We let this set of common neighbors be $S_{i, j+1}$. It 
is now elementary to verify that the $S_{i,j+1}$ satisfy the conditions of the first induction.  Hence, it only remains to show that the second induction holds good.

To begin the induction, we let $T_{j+1, j+1}$ be $S_{j+1,j}$. This clearly satisfies the required conditions. Suppose, therefore, that $T_{j+1,k}$ has been defined and we now wish to find a subset $T_{j+1, k+1}$ of $T_{j+1, k}$ satisfying the conditions of the induction. Consider the graph between $T_{j+1, k}$ and $S_{k+1, j}$. Either red or blue will have density at least $\frac{1}{2}$ in this graph. We let $\chi(j+1, k+1)$ be such a color, breaking a tie arbitrarily. 

Now apply Lemma \ref{drc} to the bipartite graph of color $\chi(j+1,k+1)$ between $T_{j+1, k}$ and $S_{k+1,j}$. We take $N_1 = |T_{j+1,k}|$, $N_2 = |S_{k+1,j}|$, $m = N_2^{1 - 
1/2\sqrt{\log \log n}}$, $s = \log n_{j+1} \log \log \log n$ and $t = \frac{\log n_{j+1}}{ 4 \sqrt{\log \log n}}$. We need to verify that $\binom{N_1}{s} (\frac{m}{N_2})^t \leq 
\frac{p^t N_1}{2}$ with $p=1/2$. It will be enough to show that $N_1^s (\frac{m}{p N_2})^t \leq 1$. But this is easy to check, since \[N_1^s \left(\frac{m}{p N_2}\right)^t \leq 
(2N_1)^s N_2^{-t/2 \sqrt{\log \log n}} = \left(\frac{(2 N_1)^{\log \log \log n}}{N_2^{1/8 \log \log n}}\right)^{\log n_{j+1}} \leq \left(\frac{(2 n_{j+1})^{\log \log \log 
n}}{n_{k+1}^{1/16 \log \log n}}\right)^{\log n_{j+1}} < 1.\] Here we used that $N_1 = |T_{j+1,k}| \leq n_{j+1}$, $N_2 = |S_{k+1,j}| \geq n_{k+1}^{1/2}$ and, whenever $k > j$ and $n$ is 
sufficiently large, \[n_{k+1} \geq n_{j+1}^{2^{\sqrt{\log \log n}}} \geq (2 n_{j+1})^{16 \log \log n \log \log \log n}.\] Therefore, there exists a subset $M_{k+1}$ of $T_{j+1,k}$ of 
order $\frac{p^t N_1}{2}$ such that every vertex subset of order $s$ has at least $m$ common neighbors in $S_{k+1, j}$. We let $T_{j+1, k+1} = M_{k+1}$. 
Note that 
\[|T_{j+1,k+1}| \geq \frac{p^t |T_{j+1,k}|}{2} =\frac{1}{2}\, 2^{-\frac{\log n_{j+1}}{4 \sqrt{\log \log n}}} n_{j+1}^{\frac{1}{2} - \frac{k}{4d}}=
\frac{1}{2}\,n_{j+1}^{-\frac{1}{8(d+1)}} n_{j+1}^{\frac{1}{2} - \frac{k}{4d}} \geq 
n_{j+1}^{\frac{1}{2} - \frac{k+1}{4d}},\] 
as required. Moreover, since $k \leq d \leq  \frac{1}{2} \sqrt{\log \log n}$, every subset of $T_{j+1,k+1}$ 
of order $\log n_{j+1} \log \log \log n$ has at least \[m \geq |S_{k+1, j}|^{1 - 1/2 \sqrt{\log \log n}} \geq \left(n_{k+1}^{1 - \frac{j}{2(k+1)}}\right)^{1 - 1/2 \sqrt{\log \log n}} 
\geq n_{k+1}^{1 - \frac{j+1}{2(k+1)}}\] common neighbors in $S_{k+1,j}$, so the second requirement of the induction scheme also holds.

To complete the proof, note that for each $i = 1, \dots, d$, we have found a red clique $R_i$ and a blue clique $B_i$ of orders $\frac{1}{4}r_i \ln n_i$ and $\frac{1}{4}b_i \ln n_i$, respectively, such that every vertex in $R_i \cup B_i$ is connected to every vertex in $R_j \cup B_j$ by color $\chi(i, j)$. Consider the $2$-colored complete graph on the vertex set $\{1, 2, \dots, d\}$ where $i$ and $j$ are joined in color $\chi(i,j)$. We give each vertex the two weights $r_i$ and $b_i$. Since $b_i \geq c\ln (4c/r_i)$ if $r_i \leq b_i$ and $r_i \geq c\ln (4c/b_i)$ if $b_i \leq r_i$, we may apply Lemma \ref{WeightedRamsey2} to find a red clique $R$ such that 
\[\sum_{i \in R} r_i \geq \frac{c}{2} \ln d \geq \frac{1}{32} \log \log \log n\] 
or a blue clique $B$ such that $\sum_{i \in B} b_i \geq \frac{1}{32} \log \log \log n$. Suppose, without loss of generality, that there is a red clique $R$ such that $\sum_{i \in R} r_i \geq \frac{1}{32} \log \log \log n$. 

Consider now the set $\mathcal{R} = \bigcup_{i \in R} R_i$. Since $R$ is a red clique by coloring $\chi$, the edges between different $R_i$ are red. Therefore, since also each $R_i$ is a red clique, we see that $\mathcal{R}$ is a red clique in the original graph. Moreover,
\[\sum_{j \in \mathcal{R}} \frac{1}{\log j} \geq \sum_{i \in R} \sum_{j \in R_i} \frac{1}{\log j} \geq \sum_{i \in R} \frac{r_i}{4} \log n_i \frac{1}{\log 2 n_i} \geq \sum_{i \in R} \frac{r_i}{8} \geq 2^{-8} \log \log \log n,\]
as required.
\end{proof}

\section{Monochromatic sets with differences satisfying a prescribed order}\label{sect4}

In this section we prove Theorem \ref{shelahorder}, which gives an improved bound for Ramsey numbers with fixed order type. We begin with several simple definitions and lemmas. 

An {\it interval} $I$ of integers is a set of consecutive integers. Let $S$ be a nonempty set of integers, and  $\min(S)$ and $\max(S)$ denote the minimum and maximum integers in $S$. The density $d_I(S)$ of $S$ with respect to an interval $I$ of integers with $S \subset I$ is $|S|/|I|$. 

The following definition is useful for finding cliques of a certain order type.  

\begin{definition}
An ordered pair $(T_1,T_2)$ of sets of integers are {\it separated} if, for $j=1,2$, 
$$\min(T_2)-\max(T_1) > \max(T_j)-\min(T_j).$$ 
\end{definition}

The next lemma shows that any dense subset $S$ contains a pair of large dense subsets which are separated. 

\begin{lemma}\label{sep}
Let $S$ be a finite set of integers with $|S| \geq 6$, and $I=[a,b]$ an interval with $S \subset I$. Then, for $j=1,2$, there is $T_j \subset S$ and an interval $I_j$ with $T_j \subset I_j$,  $(T_1,T_2)$ separated,  $d_{I_j}(T_j) \geq d_I(S)/2$, and $|I_j| \geq |S|/12$.  
\end{lemma}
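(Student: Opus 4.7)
The plan is to run a sliding-window averaging argument with windows of the minimum allowed length $\ell := \lceil |S|/12 \rceil$. For each integer $x$, let $f(x) = |S \cap [x, x+\ell-1]|$. The reason to fix this length is a clean geometric observation: if two such windows $I_1 = [x_1, x_1+\ell-1]$ and $I_2 = [x_2, x_2+\ell-1]$ are chosen with $x_2 - x_1 \geq 2\ell - 1$, then the induced pair $T_j := S \cap I_j$ is automatically separated, because the gap $\min(T_2) - \max(T_1) \geq \ell$ strictly exceeds each span $\max(T_j) - \min(T_j) \leq \ell - 1$.

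Next I will count how many $x$ have large $f(x)$. The standard double count gives $\sum_x f(x) = n\ell$, where $n = |S|$ and the sum runs over a range of $L + \ell - 1$ integers with $L = \max(S) - \min(S) + 1 \leq |I|$. Setting $A := \{x : f(x) \geq (d/2)\ell\}$ with $d = d_I(S)$, I split the sum: contributions from $A$ are each at most $\ell$, contributions from its complement are each less than $(d/2)\ell$. Using $Ld \leq n$ and $d \leq 1$ (which holds because $S$ and $I$ are integer sets with $S \subset I$), this produces
\[
|A| \;\geq\; \frac{n - (\ell - 1)d}{2 - d} \;\geq\; \frac{n - \ell + 1}{2},
\]
which, with $\ell = \lceil n/12 \rceil$, is at least $(11n+1)/24$.

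Finally I take $x_1 = \min A$ and $x_2 = \max A$. Since $A$ is a set of integers, $x_2 - x_1 \geq |A| - 1$, and the inequality $|A| \geq 2\ell$ for every $n \geq 6$ gives $x_2 - x_1 \geq 2\ell - 1$, fulfilling the condition from the geometric observation above. Setting $I_j = [x_j, x_j+\ell-1]$ and $T_j = S \cap I_j$, the three desired conclusions are then immediate: $|I_j| = \ell \geq |S|/12$; the density $|T_j|/|I_j| = f(x_j)/\ell \geq d/2$ follows from $x_j \in A$; and separation holds by construction. The one fussy point, and main obstacle if any, is verifying $|A| \geq 2\ell$ across the full range $n \geq 6$, which reduces to elementary arithmetic comparing $(11n+1)/24$ with $2\lceil n/12 \rceil$ and a direct check at the tiny cases $n = 6, 7$ (where $\ell = 1$ and the inequality is trivial).
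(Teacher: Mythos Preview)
Your argument is correct and genuinely different from the paper's. The paper first trims off low-density ends of $S$: it lets $i_1$ be the last index with $d_{[a,i_1]}(S\cap[a,i_1])\le d_I(S)/2$ and $i_2$ the first index past $i_1$ with $d_{[i_2,b]}(S\cap[i_2,b])\le d_I(S)/2$, takes $S'=S\cap(i_1,i_2)$ (so $|S'|\ge |S|/2$), sets $I'=[\min S',\max S']$, trisects $I'$, and uses the two outer thirds as $I_1,I_2$. The trimming guarantees each outer third retains density at least $d_I(S)/2$, and the middle third enforces separation. By contrast, you bypass the trimming step entirely with a sliding-window double count at the fixed scale $\ell=\lceil |S|/12\rceil$, showing the set $A$ of dense windows is so large that its extremal members must be far enough apart. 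The paper's trisection is perhaps more pictorial and needs no endgame arithmetic, while your averaging argument is a clean one-shot computation; both yield the same constants. One minor remark: your final ``fussy point'' is in fact uniformly easy, since $\ell=\lceil n/12\rceil$ gives $n+1\ge 5\ell$ for every $n\ge 6$, hence $(n-\ell+1)/2\ge 2\ell$ directly without singling out $n=6,7$.
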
  
\begin{proof}
Let $i_1 \in I$ be the maximum integer (if it exists) such that the restriction of $S$ to the interval $[a,i_1]$ has 
density at most $d_I(S)/2$. If no such $i_1$ exists, let $i_1=a-1$. Similarly, let $i_2 \in I$ be the minimum integer greater than $i_1$ (if it exists) such that the restriction of $S$ to the interval $[i_2,b]$ has density at most $d_I(S)/2$. If no such $i_2$ exists, let $i_2=b+1$. Let $S'$ be the restriction of $S$ to the interval $(i_1,i_2)$, i.e., the set of $s \in S$ with $i_1<s<i_2$. 
Since at most $1/2$ of the elements of $S$ are deleted to obtain $S'$, we have $|S'| \geq |S|/2$.

Let $I'$ denote the interval $[\min(S'),\max(S')]$ of integers. Partition the interval $I'$ into three intervals each of size as equal as possible, and let $I_1$ be the first interval and $I_2$ be the last interval. This guarantees that if $T_1 \subset I_1$ and $T_2 \subset I_2$, then $(T_1,T_2)$ is separated. It follows from the definition of $i_1$ and $i_2$ that the restrictions of $S$ to each of the two end intervals has density at least $d_I(S)/2$. Let $T_j=|S \cap I_j|$ for $j=1,2$. Since $S' \subset I'$, we have $|I'| \geq |S'| \geq |S|/2$. The end intervals have size at least $\lfloor |I'|/3 \rfloor$. Hence, for $j=1,2$, $$|I_j| \geq \lfloor |I'|/3 \rfloor \geq \lfloor |S|/6 \rfloor \geq |S|/12.$$
The result follows.
\end{proof}

We also need the following simple lemma which allows us to pass to a subinterval of a given size without the density decreasing significantly.

\begin{lemma}\label{shrink}
Suppose $S$ is a set of positive integers, $J$ is an interval containing $S$, and $r \leq |J|$ is a positive integer. Then there is a subset $S' \subset S$ and an interval $I$ of size $r$ containing $S'$ such that $d_I(S') \geq  d_J(S)/2$. 
\end{lemma}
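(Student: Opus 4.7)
The statement is a standard averaging claim: some length-$r$ window inside $J$ must see at least half the global density of $S$ in $J$. The plan is a one-shot pigeonhole over a cover of $J$ by length-$r$ intervals, with the factor of $2$ arising simply because $\lceil |J|/r\rceil$ may exceed $|J|/r$ by at most $1$.

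Concretely, write $N=|J|$ and set $k=\lceil N/r\rceil$. First I would construct a cover of $J$ by $k$ intervals $I_1,\ldots,I_k$, each of length exactly $r$, where $I_j$ starts at $\min(J)+(j-1)r$ for $1\le j\le k-1$ and $I_k$ is the unique length-$r$ interval ending at $\max(J)$. This is well-defined because $r\le N$, and by construction every integer of $J$ lies in at least one $I_j$, so
$$\sum_{j=1}^{k}|S\cap I_j|\;\ge\;|S|.$$
Pigeonhole then yields an index $j^{*}$ with $|S\cap I_{j^{*}}|\ge |S|/k$, and I would take $I=I_{j^{*}}$ and $S'=S\cap I_{j^{*}}$.

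To finish, I would compare $d_I(S')=|S'|/r$ to $d_J(S)=|S|/N$. The ratio is at least $N/(kr)$, so it suffices to verify $k\le 2N/r$. Since $r\le N$ we have $N/r\ge 1$, and therefore $k=\lceil N/r\rceil\le N/r+1\le 2N/r$; this is the unique place where the hypothesis $r\le |J|$ enters and is precisely what accounts for the factor $1/2$ in the conclusion. There is no real obstacle: the only thing to be careful about is insisting that every interval in the cover have length exactly $r$ (rather than at most $r$), so that the denominator in $d_I(S')$ is uniformly $r$ and the short last piece does not spoil the bound; the shifted choice of $I_k$ above handles this cleanly.
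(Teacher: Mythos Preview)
Your argument is correct and essentially identical to the paper's: both cover $J$ by $\lceil |J|/r\rceil$ intervals of length $r$ and use the bound $\lceil |J|/r\rceil \le 2|J|/r$ (valid since $r\le |J|$) together with pigeonhole. The only cosmetic difference is that the paper phrases the pigeonhole step as a contradiction (``otherwise the total count is less than $|S|$''), while you state it directly.
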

\begin{proof}
We can cover the interval $J$ with $\lceil |J|/r \rceil$ intervals of size $r$, some of which may be overlapping. If $S$ restricted to any of these intervals has density at least $d_J(S)/2$, then we can pick $S'$ to be this subset of $S$. Otherwise, since $\lceil |J|/r \rceil \leq 2|J|/r$, the total number of elements of $S$ is less than $$\lceil |J|/r\rceil rd_J(S)/2 \leq |S|,$$ a contradiction, which completes the proof. 
\end{proof}

For a permutation $\pi$ of $[k-1]$, an increasing sequence $a_1,\ldots,a_k$ of $k$ integers has {\it type} $\pi$ if 
$$a_{\pi(1)+1}-a_{\pi(1)}>a_{\pi(2)+1}-a_{\pi(2)}>\ldots>a_{\pi(k-1)+1}-a_{\pi(k-1)}.$$ 
Let $G$ be a graph on a subset of the integers, $J$ be an interval, and $S \subset J \cap V(G)$. For $0<\alpha,\beta,\gamma,\delta,p<1$, we say that $G$ is $(\alpha,\beta,\gamma,\delta,p)$-heavy with respect to $S$ if for all subsets $S' \subset S$ with $|S'| \geq \gamma|S|$ for which there is an interval $J'$ with $S' \subset J'$ and $d_{J'}(S') \geq \delta d_J(S)$ there are subsets $T_1, T_2 \subset S'$ and, for $j = 1, 2$, intervals $I_j$ with $T_j \subset I_j$ such that  $(T_1,T_2)$ is a separated pair, $d_{I_j}(T_j) \geq \alpha d_{J'}(S')$, $|I_j|\geq \beta |S'|$ and the edge density of $G$ across $T_1,T_2$ is at least $p$.  That is, $G$ is heavy with respect to $S$ if every large subset $S'$ of $S$ which is dense in an interval $J'$ contains a separated pair of subsets $T_1$ and $T_2$, dense in large intervals $I_1$ and $I_2$, such that there is a positive density of edges of $G$ between them.

Let $\phi:[h-1]\rightarrow [k-1]$ be an injective function, $0<\eta <1$, and $r \in \mathbb{N}$. A clique in $G$ of type $(\phi,\eta,r)$ consists of $h$ pairwise adjacent vertices $a_1,\ldots,a_h$ such that $a_{i+1}-a_{i} \in [\eta^{\phi(i)}r,\eta^{\phi(i)-1}r)$ for $i \in [h-1]$. Note that if $h=k$ and $\phi$ is the inverse permutation of $\pi$, then a clique of type  $(\phi,\eta,r)$ is also a clique of type $\pi$. 

The following lemma shows that if a large subset $S$ of a graph $G$ is $(\alpha,\beta,\gamma,\delta,p)$-heavy with appropriate choices of parameters $\alpha$, $\beta$, $\gamma$, $\delta$, and $p$, then it must contain a clique of type $(\phi,\eta,r)$. We next describe the proof, which is by induction on the order $h$ of the desired clique. The key idea is that we find the largest gap first. Let $\tau$ be the minimum element of the image of $\phi$, and $j$ be such that $\phi(j)=\tau$. We first pass to an interval $I$ of size just smaller than $\eta^{\tau-1}r$ 
using Lemma 
\ref{shrink}. Using the heavy hypothesis, we find a separated pair $(T_1,T_2)$ of large subsets of $S \cap I$ such that the edge density of $G$ between $T_1$ and $T_2$ is at least $p$, and $\min(T_2)-\max(T_1) \geq \eta^{\tau}r$. This implies that for any choice of $a_{j} \in T_1$ and $a_{j+1} \in T_2$, we have 
$a_{j+1}-a_j \in [\eta^{\tau}r,\eta^{\tau-1}r)$.
Applying the dependent random choice lemma, Lemma \ref{drc}, we find that there is a large subset $U \subset T_1$ such that all small subsets of $U$ have many common neighbors in $T_2$. We find from the heavy hypothesis and induction that there is a clique with vertices $a_1,\ldots,a_j \in U$ such that, for $1 \leq i \leq j-1$, $a_{i+1}-a_i \in [\eta^{\phi(i)}r,\eta^{\phi(i)-1}r)$. Since every small subset of $U$ has many common neighbors in $T_2$, the set $W$ of common neighbors of $a_1,\ldots,a_j$ in $T_2$ is large.  We again find from the heavy hypothesis and induction that there is a clique with vertices $a_{j+1},\ldots,a_h \in W$ such that, for $j+1 \leq i \leq h-1$, $a_{i+1}-a_i \in [\eta^{\phi(i)}r,\eta^{\phi(i)-1}r)$. We conclude that $a_1,\ldots,a_h$ forms the desired clique in $G$ of type $(\phi,\eta,r)$.

\begin{lemma}\label{keyfororder} 
Suppose $G$ is a graph on a subset of the integers, $J$ is an interval, $S \subset J \cap V(G)$, $\phi:[h-1] \rightarrow [k-1]$ is an injective 
function, $0 < \alpha, \beta, \gamma, \delta, \eta, p < 1$, and $r \in \mathbb{N}$. Let $t=2\sqrt{k\log_{1/p} |S|}$, $\epsilon=p^t/2$, $\lambda=\left(\frac{\epsilon 
\alpha}{4}\right)^{2h}$, and $\kappa= \lambda \beta d_J(S)^2\eta^kr$. Provided that $\kappa \geq h$, $|J| \geq r$, $\eta \leq \beta \lambda d_J(S)^2$, $\delta \leq \lambda$, and 
$\gamma|S| \leq \kappa$, the following holds. If $G$ is $(\alpha,\beta,\gamma,\delta,p)$-heavy with respect to $S$ then there is a clique in $G$ of type $(\phi,\eta,r)$. 
\end{lemma}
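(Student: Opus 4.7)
The plan is to induct on $h$. The base case $h = 1$ is trivial: since $\kappa \geq h \geq 1$ forces $S$ to be nonempty, any single vertex of $S$ is a clique of type $(\phi,\eta,r)$ (the constraints on consecutive differences are vacuous). For $h \geq 2$, I follow the outline given in the paragraph preceding the lemma, which locates the largest required gap first and then recurses on either side of it.

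First, set $\tau = \min_{i} \phi(i)$ and let $j \in [h-1]$ be the unique index with $\phi(j) = \tau$; the gap $a_{j+1} - a_j$ will be the largest one the final clique has to realize, and must fall in $[\eta^\tau r, \eta^{\tau-1} r)$. Apply Lemma \ref{shrink} to $S \subset J$ with target length $r' = \lfloor \eta^{\tau-1} r \rfloor \leq |J|$ to obtain a subset $S_0 \subset S$ inside some interval $I$ of length $r'$ with $d_I(S_0) \geq d_J(S)/2$. The inequalities $\eta \leq \beta\lambda d_J(S)^2$, $\delta \leq \lambda$ and $\gamma |S| \leq \kappa$ let one verify that $S_0$ satisfies the hypotheses of the heavy property (it is large enough and still dense enough relative to $d_J(S)$). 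Invoking the heavy hypothesis on $S_0$ produces a separated pair $(T_1,T_2)$ with containing intervals $I_1, I_2$ of size at least $\beta |S_0|$, densities $d_{I_j}(T_j) \geq \alpha d_I(S_0)$, and bipartite edge density at least $p$ across $(T_1,T_2)$. Because $T_1, T_2 \subset I$ with $|I| < \eta^{\tau-1} r$, while $|I_j| \geq \beta |S_0| \geq \beta (d_J(S)/2) r'$, the separated condition combined with the bound $\eta \leq \beta\lambda d_J(S)^2$ pins the gap $\min(T_2) - \max(T_1)$ into the desired window $[\eta^\tau r, \eta^{\tau-1} r)$.

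Next, apply the dependent random choice lemma, Lemma \ref{drc}, to the bipartite $p$-dense graph between $T_1$ and $T_2$ with $t = 2\sqrt{k \log_{1/p}|S|}$ and with $s$ on the order of $h$, obtaining $U \subset T_1$ with $|U| \geq \epsilon |T_1|$ such that every $s$-subset of $U$ has at least $m$ common neighbors in $T_2$, with $m$ chosen so that the second recursion can proceed. I then apply the inductive hypothesis twice: once to the graph $G$ restricted to $U \subset I_1$ with the injective function $\phi|_{[j-1]}: [j-1] \to [k-1]$, and once to $W := T_2 \cap \bigcap_{i=1}^{j} N_G(a_i) \subset I_2$ (which has size at least $m$ by the dependent random choice conclusion) with the shifted function $i \mapsto \phi(j+i)$. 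The first recursion returns $a_1 < \cdots < a_j$ with the correct gap types, the second returns $a_{j+1} < \cdots < a_h$; adjacencies across the two halves are guaranteed because $W \subset \bigcap_i N_G(a_i)$, and the middle gap was already shown to lie in the correct window. Concatenating yields a clique of type $(\phi, \eta, r)$.

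The main obstacle is parameter bookkeeping. The heavy property is not a priori inherited by $U$ and $W$ in their shorter intervals; however, because any subset $S''$ of $U$ (resp.\ $W$) that is large and dense relative to $U$ (resp.\ $W$) is still large and dense relative to the original $S$, up to factors controlled by $\epsilon, \alpha, \beta$ and $d_J(S)$, the original heavy hypothesis can be re-used with modified parameters $(\alpha,\beta,\gamma',\delta',p)$. The exponent $2h$ in $\lambda = (\epsilon\alpha/4)^{2h}$ is calibrated so that these parameter degradations compound harmlessly across the $h$ recursion levels, preserving the inequalities $\kappa' \geq h'$, $\eta \leq \beta\lambda' (d')^2$, $\delta' \leq \lambda'$, $\gamma'|S'| \leq \kappa'$ required to re-invoke the lemma on each subproblem. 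Balancing $s$ and $m$ in Lemma \ref{drc} so that both $U$ and $W$ are simultaneously large enough to carry their respective recursions is the tightest part of the estimate.
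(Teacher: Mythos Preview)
Your overall architecture matches the paper's proof exactly: induct on $h$, locate the minimum value $\tau$ of $\phi$, shrink to an interval of length just under $\eta^{\tau-1}r$, invoke heaviness to get a separated pair $(T_1,T_2)$, apply dependent random choice to get $U\subset T_1$, then recurse on $U$ and on the common neighborhood $W\subset T_2$. The informal justification you give for the gap landing in $[\eta^\tau r,\eta^{\tau-1}r)$ and for how heaviness is inherited by $U$ and $W$ (with adjusted $\gamma',\delta'$) is also in line with the paper.

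There is, however, a genuine gap in how you set up the recursion. You recurse with the \emph{same} $r$ and $k$, using $\phi|_{[j-1]}:[j-1]\to[k-1]$ on $U\subset I_1$. But the lemma's hypotheses require the ambient interval to have length at least $r$, and here $I_1\subset I$ with $|I|<\eta^{\tau-1}r\le r$, so $|I_1|<r$ and the hypothesis $|J|\ge r$ fails outright. Taking a larger ambient interval instead does not help: the density $d_{J'}(U)$ then picks up a factor of roughly $\eta^{\tau-1}$, and since $\kappa'$ depends on $d_{J'}(U)^2\eta^k r$, you lose an uncontrolled factor $\eta^{2(\tau-1)}$ which the exponent $2h$ in $\lambda$ is not designed to absorb (nothing in the calibration of $\lambda$ accounts for how large $\tau$ might be).

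The paper resolves this by shifting all three of $\phi$, $k$, and $r$ simultaneously: it recurses with $\phi_1(x)=\phi(x)-\tau$ mapping into $[k'-1]$ where $k'=k-\tau$, and with $r'=\eta^\tau r$. This is not cosmetic. The shift guarantees $|I_1|\ge |U|\ge \eta^\tau r = r'$, restoring the interval-length hypothesis, and it keeps $\eta^{k'}r'=\eta^k r$ so that the new $\kappa'$ lines up with the old $\kappa$ without any $\tau$-dependent loss. With this rescaling the verifications $\kappa'\ge \kappa$, $\eta\le\beta\lambda' d_{I_1}(U)^2$, $\delta'\le\lambda'$, $\gamma'|U|\le\kappa'$ all go through using only $d_{I_1}(U)\ge(\epsilon\alpha/2)d_J(S)$ and $\lambda'\ge(\epsilon\alpha/4)^{-2(h-j)}\lambda$. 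Once you make this correction, your argument becomes essentially the paper's.
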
 

\begin{proof} 
The proof is by induction on $h$. In the base case $h=1$, it suffices to show that $S$ is nonempty, which it clearly is. The induction hypothesis is that the lemma holds 
for all positive integers $h'<h$, where $h \geq 2$.

Let $\tau=\min_{i \in [h-1]} \phi(i)$ and $j \leq h-1$ be such that $\phi(j)=\tau$. Let $\phi_1:[j-1] \rightarrow [k-\tau-1]$ and $\phi_2:[h-j-1]\rightarrow [k-\tau-1]$ be the 
injective 
functions given by $\phi_1(x)=\phi(x)-\tau$ and $\phi_2(x)=\phi(x+j)-\tau$.

Let $s$ be the largest integer less than $\eta^{\tau-1}r$. Since $\eta^{\tau-1}r \geq \eta^{k}r \geq \kappa \geq h \geq 2$, then $s \geq \eta^{\tau-1}r/2$. As $|J| \geq r \geq 
s$, we can apply 
Lemma \ref{shrink} to obtain a subset $S' \subset S$ and an interval $I$ with $|I|=s$ and $S' \subset I$ such that $d_{I}(S') \geq d_J(S)/2$.

We have $d_I(S') \geq d_J(S)/2 \geq \lambda d_J(S) \geq \delta d_J(S)$ and 
$|S'|=d_I(S') |I| \geq \frac{d_J(S)}{2}|I| \geq \frac{d_J(S)}{4}\eta^{\tau-1}r \geq \kappa \geq \gamma |S|$. Hence, by 
the heaviness hypothesis, for $i=1,2$, there is an interval $I_i$ and a subset $T_i \subset I_i \cap S'$ such that $(T_1,T_2)$ is a separated pair, $d_{I_i}(T_i) \geq \alpha d_I(S') 
\geq \frac{\alpha}{2}d_J(S)$, $|I_i| \geq \beta |S'|$ and the edge density of $G$ between $T_1$ and $T_2$ is at least $p$.
Note that $|T_i| = |I_i| d_{I_i}(T_i) \geq d_{I_i}(T_i) \beta |S'| \geq \frac{1}{2} \alpha \beta d_J(S) |S'|$.
 
We apply Lemma \ref{drc} to the bipartite subgraph of $G$ with parts $T_1$ and $T_2$ and $s=j$, with $t$ as defined in the statement of the lemma, $N_1=|T_1|$, $N_2=|T_2|$, and 
$m=\epsilon|T_2|$. Since $|T_1|^k \leq |S|^k = p^{-\frac{1}{4}t^2}$, we can verify that $${|T_1| \choose j}\left(\frac{\epsilon|T_2|}{|T_2|}\right)^t \leq |T_1|^k \epsilon^t = |T_1|^k 
p^{t^2}/2^t \leq p^{\frac{3}{4}t^2}/2^t \leq p^t|T_1|/2.$$ 
Using that $|T_1|\geq \frac{1}{2} \alpha \beta d_J(S) |S'|$, $|S'| \geq \frac{d_J(S)}{4}\eta^{\tau-1}r $ and $\eta \leq \beta \lambda d_J(S)^2 \leq \beta 
\left(\frac{\epsilon \alpha}{16}\right)
d_J(S)^2$,
we conclude that there is a subset $U \subset T_1$ with $$|U| \geq p^{t}|T_1|/2 = \epsilon |T_1| \geq \epsilon\alpha\beta 
\frac{d_J(S)}{2} |S'|\geq \epsilon \alpha \beta \frac{d_J(S)^2}{8}\eta^{\tau-1}r \geq \eta^\tau r$$ such that every $j$ vertices in $U$ have at least $\epsilon |T_2|$ common neighbors 
in $T_2$. Since $(T_1,T_2)$ is separated and $|T_1| \geq |U| \geq \eta^\tau r$ we have that
for any $a \in T_1$ and $b \in T_2$, 
$$\eta^{\tau}r \leq |T_1| \leq  b-a \leq |I| < \eta^{\tau-1} r.$$

We also have $$d_{I_1}(U)=\frac{|U|}{|I_1|} \geq \frac{\epsilon|T_1|}{|I_1|} = \epsilon d_{I_1}(T_1) \geq \epsilon \alpha d_I(S') \geq \epsilon \frac{\alpha}{2}d_J(S).$$ 

Let $\delta' = \frac{d_J(S)}{d_{I_1}(U)}\delta$ and $\gamma'=\frac{|S|}{|U|}\gamma$. Since $G$ is $(\alpha,\beta,\gamma,\delta,p)$-heavy with respect to $S$ and $U \subset S$, then $G$ 
is also $(\alpha,\beta,\gamma',\delta',p)$-heavy with respect to $U$.

Let $t'=2\sqrt{(k-\tau)\log_{1/p} |U|}$, $k'=k-\tau$, $r'=\eta^{\tau}r$ and $\epsilon'=p^{t'}/2$, so $\epsilon' \geq \epsilon$. 
Let $\lambda'=\left(\frac{\epsilon' \alpha}{4}\right)^{2j}$ and $\kappa'= \lambda' \beta d_{I_1}(U)^2\eta^{k'} r'$. 
Then $\lambda \leq \left(\frac{\epsilon \alpha}{4}\right)^{2} \lambda'$ and therefore  
$$\kappa'= \lambda' \beta d_{I_1}(U)^2\eta^{k'}r' \geq \lambda' \beta \left(\epsilon \frac{\alpha}{2}d_J(S)\right)^2 \eta^kr \geq
\lambda \beta d_J(S)^2\eta^kr=\kappa \geq h \geq j.$$ 
Since $|I_1| \geq |U| \geq  \eta^{\tau}r=r'$, $\delta \leq \lambda=\left(\frac{\epsilon
\alpha}{4}\right)^{2h} \leq \left(\frac{\epsilon \alpha}{4}\right)^{2}\lambda'$, $\epsilon' \geq \epsilon$ and $d_{I_1}(U) \geq \frac{\epsilon \alpha}{2}d_J(S)$ we have that
 $$\eta \leq \beta d_J(S)^2 \lambda \leq \beta 
d_{I_1}(U)^2 
\lambda',$$ 
$$\delta'=\frac{d_J(S)}{d_{I_1}(U)}\delta \leq 2\epsilon^{-1}\alpha^{-1}\delta \leq \left(\frac{\epsilon\alpha}{4}\right)^{2h-1} \leq 
\lambda',$$ and $$\gamma'|U| = \gamma |S| \leq \kappa \leq \kappa' $$ 
Thus, we can apply the induction 
hypothesis and obtain  a clique in $G$  with vertices $a_1,\ldots,a_{j}$ in $U$ which is of type $(\phi_1,\eta,\eta^{\tau}r)$.  

Let $W$ be the set of common neighbors of $a_1,\ldots,a_j$ in $T_2$, so $|W| \geq \epsilon |T_2|$. Let $\delta'' = \frac{d_J(S)}{d_{I_2}(W)}\delta$ and $\gamma''=\frac{\gamma|S|}{|W|}$. As above, since $W \subset S$ and $G$ is $(\alpha,\beta,\gamma,\delta,p)$-heavy with respect to $S$, we have that $G$ is also $(\alpha,\beta,\gamma'',\delta'',p)$-heavy with respect to $W$. Again, by the induction hypothesis (exactly as done above, replacing $U$ by $W$ and $j$ by $h-j$), there is a clique $b_1,\ldots,b_{h-j}$ in $G$ with vertices from $W$ of type $(\phi_2,\eta,\eta^{\tau}r)$. Then, letting $a_{j+i}=b_i$ for $1 \leq i \leq h-j$, we have that $a_1,\ldots,a_h$ form a clique of type $(\phi,\eta,r)$ in $G$, completing the proof.  
\end{proof}

The following theorem is a restatement of Theorem \ref{shelahorder}. Recall that if $h=k$ and $\phi$ is the inverse permutation of $\pi$, then a clique of type  $(\phi,\eta,r)$ is also a clique of type $\pi$. In the proof of Theorem \ref{shelahorder}, we show that a $q$-colored complete graph on sufficiently many vertices must contain a subset which is appropriately heavy in the graph of one of the colors. Lemma \ref{keyfororder} then implies that the graph of this color contains the desired monochromatic clique with order type $\pi$. To find such a heavy subset, we suppose for contradiction that none exists. We then find a large interval $I_q$ and a dense subset $S_q$ of $I_q$ such that for each color $i$, every separated pair $(T_1,T_2)$ of subsets of $S_q$ and large intervals $J_1,J_2$ with $T_j$ a dense subset of $J_j$ has edge density less than $p=1/q$ in color $i$ between $T_1$ and $T_2$. But, by Lemma \ref{sep}, $S_q$ contains a separated pair $(T_1,T_2)$ of large dense subsets. By the pigeonhole principle, the edge density between $T_1$ and $T_2$ in one of the $q$ colors is at least $1/q$, contradicting the existence of $S_q$.

\begin{theorem}
Let $k,q \geq 2$ be integers and $\pi$ a permutation of $[k-1]$. Every $q$-coloring of the complete graph on $[n]$ with $n=2^{k^{20q}}$ contains a monochromatic clique of type $\pi$. 
\end{theorem}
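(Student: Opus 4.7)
The plan is to deduce the theorem from Lemma \ref{keyfororder}. Fix $p=1/q$, take $h=k$ and $\phi=\pi^{-1}$, so that a clique of type $(\phi,\eta,r)$ in any single color class is precisely a monochromatic clique of type $\pi$. I would choose $\alpha,\beta$ as small absolute constants, $\gamma,\delta\in(0,1)$ as constants depending on $q$ large enough that $\delta^q\geq 2\alpha$ and $\gamma^q\geq 12\beta$, $r$ a constant fraction of $n$, and $\eta = k^{-C(q)}$ for a suitable $C(q)$, so that whenever some color $i$'s graph is $(\alpha,\beta,\gamma,\delta,p)$-heavy with respect to the ambient subset under consideration, all five inequalities of Lemma \ref{keyfororder} hold and the lemma produces the desired monochromatic clique of type $\pi$.

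Assume for contradiction that no color admits such a heavy subset. Starting from $S_0=J_0=[n]$, I build a nested chain $[n]=S_0\supset S_1\supset\cdots\supset S_q$ with $S_i\subset J_i$ by the following step. At stage $i$, the assumed failure of heaviness for color $i$ with respect to $S_{i-1}\subset J_{i-1}$ furnishes a witness subset $S_i\subset S_{i-1}$ inside some interval $J_i$ with $|S_i|\geq\gamma|S_{i-1}|$ and $d_{J_i}(S_i)\geq\delta\,d_{J_{i-1}}(S_{i-1})$, such that no separated pair $(T_1,T_2)\subset S_i$ satisfying $d_{I_j}(T_j)\geq\alpha d_{J_i}(S_i)$ and $|I_j|\geq\beta|S_i|$ has at least a $p$-fraction of its edges in color $i$.

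Now I apply Lemma \ref{sep} to $S_q$, obtaining a separated pair $(T_1,T_2)\subset S_q$ with intervals $I_j\supset T_j$, $d_{I_j}(T_j)\geq d_{J_q}(S_q)/2$ and $|I_j|\geq |S_q|/12$. Since the parameters were chosen with $\delta^q\geq 2\alpha$ and $\gamma^q\geq 12\beta$, for every $i\leq q$ the telescoping bounds $d_{J_q}(S_q)\geq\delta^{q-i}d_{J_i}(S_i)$ and $|S_q|\geq\gamma^{q-i}|S_i|$ give $d_{I_j}(T_j)\geq\alpha d_{J_i}(S_i)$ and $|I_j|\geq\beta|S_i|$. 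Thus $(T_1,T_2)$ is a valid ``light-testing'' pair at every stage, so for each color $i\in[q]$ the edge density of color $i$ between $T_1$ and $T_2$ is less than $p=1/q$. Summing across all $q$ colors yields total edge density less than $1$, contradicting the pigeonhole principle, and so some color must admit a heavy subset.

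The main obstacle is the quantitative bookkeeping of constants. The inequalities required by Lemma \ref{keyfororder} — most restrictively $\kappa = \lambda\beta d_J(S)^2\eta^k r\geq k$ with $\lambda=(\epsilon\alpha/4)^{2k}$ and $\epsilon=p^t/2$ for $t=2\sqrt{k\log_{1/p}|S|}$ — must be satisfied on the final heavy subset, whose density and size have degraded by factors of at most $\delta^q$ and $\gamma^q$ during the iteration, while simultaneously the propagation constraints $\delta^q\geq 2\alpha$ and $\gamma^q\geq 12\beta$ force $\delta,\gamma$ not to be too small. Tracking how $\lambda$ and $\epsilon$ depend on $k$, $q$, and $|S|$, the binding requirement becomes roughly $\log n \geq k^{\Theta(q)}$, which the choice $n=2^{k^{20q}}$ provides with room to spare.
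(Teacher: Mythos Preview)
Your overall architecture is the paper's: iterate through the colors, at each stage either color $i$ is $(\alpha,\beta,\gamma,\delta,p)$-heavy with respect to the current set $S_{i-1}$ (and Lemma~\ref{keyfororder} yields the clique) or pass to a witness $S_i$, and at the end derive a contradiction via Lemma~\ref{sep} and pigeonhole. The genuine gap is that a \emph{single} tuple $(\alpha,\beta,\gamma,\delta)$ used at every stage cannot exist. To invoke Lemma~\ref{keyfororder} at stage $i$ you need $\delta\le\lambda=(\epsilon\alpha/4)^{2k}$, in particular $\delta<\alpha^{2k}$. For your telescoping step you need the pair from Lemma~\ref{sep} inside $S_q$ to satisfy $d_{I_j}(T_j)\ge\alpha\,d_{J_i}(S_i)$ at every earlier stage $i$; since $d_{J_q}(S_q)\ge\delta^{q-i}d_{J_i}(S_i)$, this forces $\alpha\le\delta^{q-1}/2$, in particular $\alpha<\delta$. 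Combining gives $\delta<\alpha^{2k}<\delta^{2k}$, impossible for $\delta\in(0,1)$ and $k\ge 1$. No choice of $\alpha$---absolute or depending on $k,q$---avoids this circularity, and an analogous conflict arises among $\beta,\gamma,\eta$. (A secondary issue: with $r$ a fixed fraction of $n$, the hypothesis $|J_{i-1}|\ge r$ of Lemma~\ref{keyfororder} fails once the ambient interval has shrunk.)

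The paper breaks the circularity by letting the parameters depend on the stage. It sets $\alpha_q=\tfrac12$, takes $\delta_i=(\epsilon\alpha_i/4)^{2k}$ so that the hypothesis $\delta_i\le\lambda_i$ of Lemma~\ref{keyfororder} holds with equality at stage $i$, and then defines $\alpha_{i-1}=\delta_i\alpha_i$; this recursion telescopes to give exactly $\tfrac12\,d_{I_q}(S_q)=\alpha_q d_{I_q}(S_q)\ge\alpha_{q-1}d_{I_{q-1}}(S_{q-1})\ge\cdots\ge\alpha_i d_{I_i}(S_i)$, which is what the final contradiction needs. A parallel cascade $\beta_q=\tfrac{1}{12}$, $\beta_{i-1}=\gamma_i\beta_i$, $\eta_i=\beta_i\Delta$, $\gamma_i=\eta_i^{k+1}$ handles the size conditions, and $r_i$ is taken to be the length of the current interval. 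The nested dependence of these parameters is exactly what produces $\Gamma=\prod_i\gamma_i\ge(\epsilon/8)^{k^{O(q)}}$ and hence the bound $n=2^{k^{20q}}$. So the ``bookkeeping'' you flag as the main obstacle is not merely quantitative tracking; the stage-dependent cascade is a structural necessity of the argument.
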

\begin{proof}
Suppose for contradiction that there is a $q$-coloring of the edges of the complete graph on $[n]$ without a monochromatic copy of $K_k$ of type $\pi$. We label the $q$ colors $1,\ldots,q$.  Let $S_0=I_0=[n]$, so $d_{I_0}(S_0)=1$ and $|S_0|=|I_0|=n$. Let $\phi=\pi^{-1}$, $p=1/q$, $t=2\sqrt{k\log_{1/p} n}$, and $\epsilon=p^t/2$. 

For $q \geq i \geq 1$, we define $\alpha_i,\beta_i,\gamma_i,\delta_i,\eta_i$ recursively as follows, starting with $i=q$. We have $\alpha_q=1/2$, $\delta_i=\left(\frac{\epsilon\alpha_i}{4}\right)^{2k}$, and $\alpha_{i}=\delta_{i+1}\alpha_{i+1}$. Explicitly, $\delta_{q-i}=\left(\frac{\epsilon}{8}\right)^{2k(2k+1)^{i}}$, and for $i \geq 1$, $\alpha_{q-i}= \frac{1}{2}\left(\frac{\epsilon}{8}\right)^{(2k+1)^{i}-1}$. 
Let $\Delta_0=1$ and $\Delta_i=\delta_i\Delta_{i-1}^2$ for $1 \leq  i \leq q$. Let $\Delta=\Delta_q$. We have from the explicit formula for $\delta_{q-i}$ that 
$$\Delta=  \delta_q \Delta_{q-1}^2 = \delta_q \delta_{q-1}^2 \Delta_{q-2}^4 = \cdots =  \prod_{i = 0}^{q-1} \delta_{q-i}^{2^i} = \prod_{i=0}^{q-1} \left(\frac{\epsilon}{8}\right)^{2k(4k+2)^{i}}  \geq \left(\frac{\epsilon}{8}\right)^{(4k+2)^{q}} \geq \left(\frac{\epsilon}{8}\right)^{(k+2)^{2q}-2}.$$ 

Let $\beta_q=1/12$. For each $i$, let  $\eta_i=\beta_i\Delta$, and $\gamma_i=\eta_i^{k+1}$, and, if $i<q$,  $\beta_i=\gamma_{i+1}\beta_{i+1}$. Explicitly, $\beta_{q-i}=\frac{1}{12}\left(\frac{\Delta}{12}\right)^{(k+2)^i-1}$, $\eta_{q-i}=\left(\frac{\Delta}{12}\right)^{(k+2)^i}$, and $\gamma_{q-i}=\left(\frac{\Delta}{12}\right)^{(k+1)(k+2)^i}$. 
Finally, let $\Gamma_0=1$ and $\Gamma_i=\gamma_i\Gamma_{i-1}$ for $1 \leq i \leq q$. Let $\Gamma=\Gamma_q$. We have 
\begin{eqnarray*}
\Gamma&=&\prod_{i = 1}^{q} \gamma_i =\left(\frac{\Delta}{12}\right)^{(k+2)^q-1} \geq \left(\frac{1}{12} \left(\frac{\epsilon}{8}\right)^{(k+2)^{2q}-2}\right)^{(k+2)^q-1} \\
&\geq& 
\left(\left(\frac{\epsilon}{8}\right)^{(k+2)^{2q}}\right)^{(k+2)^q} = \left(\frac{\epsilon}{8}\right)^{(k+2)^{3q}} \geq \left(\frac{\epsilon}{8}\right)^{k^{6q}}.
\end{eqnarray*}

We will next define a sequence of subsets $S_0 \supset S_1 \supset \ldots \supset S_q$  and a sequence of intervals $I_0 \supset I_1 \supset \ldots \supset I_q$ such that for each $i$, $1 \leq i \leq q$, we have 
\begin{itemize} 
\item $S_i \subset I_i$, 
\item $d_{I_i}(S_{i}) \geq \delta_id_{I_{i-1}}(S_{i-1}) \geq \Delta_i$, 
\item $|S_i| \geq \gamma_i|S_{i-1}| \geq \Gamma_i n$, and 
\item there is no separated pair $(T_1,T_2)$ with $T_1,T_2 \subset S_i$ and intervals $J_1,J_2$ such that, for $j=1,2$,  $T_j \subset J_j$, $d_{J_j}(T_j) \geq \alpha_i d_{I_i}(S_i)$, $|J_j| \geq \beta_i |S_i|$, and the graph in color $i$ has edge density at least $p$ between $T_1$ and $T_2$. 
\end{itemize}

We next show how to pick $S_i$ and $I_i$ having already picked $S_{i-1}$ and $I_{i-1}$.  Since the graph in color $i$ does not contain a clique of type $\pi$, it also does not contain 
a clique of type 
$(\phi,\eta_i,r_i)$ with $r_i=|I_i|$. We now wish to apply Lemma \ref{keyfororder} with $S=S_{i-1}$ to conclude that the graph in color $i$ is not    
$(\alpha_i,\beta_i,\gamma_i,\delta_i,p)$-heavy with respect to $S_{i-1}$. To do this, we must verify the assumptions of the lemma. 

Let $\lambda_i = \left(\frac{\epsilon \alpha_i}{4}\right)^{2k}$ and $\kappa_i = \lambda_i \beta_i d_{I_{i-1}}(S_{i-1})^2 \eta_{i}^k r_{i}$. Note that $\delta_i = \lambda_i$ and
$$\eta_i = \beta_i \Delta \leq \beta_i \Delta_i = \beta_i \delta_i \Delta_{i-1}^2 \leq \beta_i \lambda_i d_{I_{i-1}}(S_{i-1})^2.$$ 
We also have
$$\gamma_i |S_{i-1}| = \eta_i^{k+1} |S_{i-1}| \leq \beta_i \lambda_i  d_{I_{i-1}}(S_{i-1})^2 \eta_{i}^{k} |S_{i-1}| \leq  \lambda_i  \beta_i d_{I_{i-1}}(S_{i-1})^2 \eta_{i}^{k} r_i= \kappa_i.$$
Finally, since $\gamma_i |S_{i-1}| \geq \Gamma_i n \geq \Gamma n$ and $n = 2^{k^{20q}}$, we have
$$\kappa_i \geq \Gamma n \geq n \left(\frac{\epsilon}{8}\right)^{k^{6q}} = n \left(\frac{q^{-2 \sqrt{k \log_q n}}}{16}\right)^{k^{6q}} \geq n \left(2^{-k^{12q}}\right)^{k^{6q}} = n 2^{-k^{18q}} \geq k.$$
Here we used that $q^{-2 \sqrt{k \log_q n}} \geq 2^{-2 k^{10q + 1} \sqrt{\log q}} \geq 2^{-k^{12q} + 4}$.

We may therefore apply Lemma \ref{keyfororder}. Hence, there is a subset $S_i \subset S_{i-1}$ and an interval $I_i \subset I_{i-1}$ satisfying the four desired properties itemized above.

However, by Lemma \ref{sep}, $S_q$ contains a separated pair $(T_1,T_2)$ and intervals $J_1,J_2$ such that, for $j=1,2$, $T_j \subset J_j$,   $d_{J_j}(T_j) \geq d_{I_q}(S_q)/2$, and $|J_j| \geq |S_q|/12$. By the pigeonhole principle, for some $i$, $1 \leq i \leq q$, the density across $T_1,T_2$ in color $i$ is at least $1/q=p$. But 
$$\frac{1}{2} d_{I_q}(S_q) = \alpha_q d_{I_q}(S_q) \geq \alpha_q \delta_q d_{I_{q-1}}(S_{q-1}) = \alpha_{q-1} d_{I_{q-1}}(S_{q-1}) \geq \alpha_{q-2} d_{I_{q-2}}(S_{q-2}) \geq \cdots \geq \alpha_id_{I_{i}}(S_i)$$ 
and, similarly, $|S_q|/12 \geq \beta_i|S_i|$, contradicting that $S_i$ contains no such separated pair. 
\end{proof}

\section{Further remarks} \label{sect6}

\subsection{Asymptotics of maximum weight monochromatic cliques} 

A well-known conjecture of Erd\H{o}s states that the limit $\lim_{n \to \infty}\frac{\log r(n)}{n}$ exists. If this limit exists, denote it by $c_0$. We will assume the conjecture that $c_0$ exists. The bounds of Erd\H{o}s and Erd\H{o}s-Szekeres on Ramsey numbers imply that $\frac{1}{2} \leq c_0 \leq 2$. 

Recall that the weight of a set $S$ of integers greater than one is the sum of $1/\log s$ over all $s \in S$, and  
$f(n)$ is the maximum real number for which any red-blue edge-coloring of $K_n$ contains a monochromatic clique of weight at least $f(n)$. Theorem \ref{erdosrodl} shows that $f(n)$ is within a constant factor of $\log \log \log n$.  We further conjecture the constant factor. 
\begin{conjecture}\label{conclude} We have $$f(n)=\left(c_0^{-2}+o(1)\right)\log \log \log n,$$ 
where $c_0=\lim_{n \to \infty} \frac{\log r(n)}{n}$. 
\end{conjecture}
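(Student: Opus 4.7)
The plan for the upper bound is to sharpen R\"odl's construction recalled in the introduction using the existence of $c_0$. Instead of doubling intervals, I would cover $[2,n]$ with $t$ intervals $[2^{a_{i-1}},2^{a_i})$ where $a_i/a_{i-1}=\lambda$ for a slowly growing parameter $\lambda = 1+o(1)$ (for instance $\lambda=1+1/\log\log\log n$), so that $t = (1+o(1))\log\log n/\log\lambda$ and $\log t = (1+o(1))\log\log\log n$. Within each interval use a near-optimal coloring that forbids a monochromatic clique of size $(c_0^{-1}+o(1))\log N_i$, where $N_i$ is the interval length; this caps the weight of a monochromatic clique inside a single interval at $(c_0^{-1}+o(1))\lambda$. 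Between intervals, apply a near-optimal Ramsey coloring on the $t$ super-vertices, so that any monochromatic clique meets at most $(c_0^{-1}+o(1))\log t$ intervals. Multiplying the two estimates gives the total weight bound $(c_0^{-2}+o(1))\lambda\log t = (c_0^{-2}+o(1))\log\log\log n$.

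\textbf{Lower bound.} For the lower bound I would follow the overall architecture of the proof of Theorem~\ref{erdosrodl}, but replace every Ramsey-theoretic estimate by its tight asymptotic version under the assumption on $c_0$. Two steps are crucial. First, Lemma~\ref{ErdSzek}, which controls the tradeoff between the red and blue cliques extracted from each $S_{i,i}$, should be upgraded to use an asymptotic bound of the form $r(s,t) \le 2^{(c_0+o(1))(s+t)}$, producing a hypothesis on the normalized weights $r_i,b_i$ consistent with a sharper weighted Ramsey inequality of constant $c_0^{-1}$ in place of $\tfrac12$. Second, Lemma~\ref{WeightedRamsey} itself must be strengthened so that its output is a monochromatic clique of total weight $(c_0^{-1}-o(1))\log n$. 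Combining this with an enlarged outer scale parameter $d = (\log\log n)^{1-o(1)}$ (so that $\log d = (1-o(1))\log\log\log n$, rather than the current $\tfrac12\log\log\log n$ coming from $d\approx\sqrt{\log\log n}$) would then yield a total weight of $(c_0^{-1}-o(1))^2\log\log\log n$, matching the conjectured constant.

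\textbf{Main obstacle.} The real difficulty is obtaining the strengthened weighted Ramsey lemma with constant $c_0^{-1}$. The current proof of Lemma~\ref{WeightedRamsey} is an induction driven by the bound $r(s,t) \le \binom{s+t-2}{s-1}$, which is tight only at exponent $c_0 = 2$; pushing the constant to $c_0^{-1}$ requires the conjectural asymptotic $r(s,t) \le 2^{(c_0+o(1))(s+t)}$ not just diagonally but throughout the asymmetric regime where $r_v$ and $b_v$ can differ by arbitrary constant factors, and the mere existence of the diagonal limit gives no direct control on that regime. A realistic intermediate target is therefore to prove the conjecture conditionally on the stronger hypothesis that $\log r(s,t)/(s+t) \to c_0$ uniformly for $s,t\to\infty$ with $s/t$ bounded; the fully unconditional version seems to require genuinely new ideas. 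Secondary losses in the dependent random choice step and in the choice of $d$ also need to be made essentially lossless, but these look routine by comparison.
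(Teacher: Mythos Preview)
This statement is a conjecture that the paper does not prove. The paper establishes the upper bound $f(n)\le (c_0^{-2}+o(1))\log\log\log n$ by exactly the construction you describe: intervals $[2^{a^{i-1}},2^{a^i})$ with $a=1+\epsilon$ and $\epsilon\to 0$ slowly, with near-optimal colorings both inside and between intervals. Your upper-bound sketch is correct and matches the paper.

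For the lower bound the paper only claims $(\tfrac14-o(1))\log\log\log n$, which equals the conjectured constant only if $c_0=2$. The paper's sketch already implements the two refinements you flag as ``routine'': it takes $d=(\log\log n)^{1-o(1)}$ and uses the full binomial Erd\H{o}s--Szekeres curve $(b_i+r_i)\log\tfrac{b_i+r_i}{r_i}-b_i\log\tfrac{b_i}{r_i}=1$ to control the weights. The bottleneck is then the diagonal configuration $r_i=b_i=\tfrac12$, which in a tailored weighted-Ramsey lemma yields a clique of size $\tfrac12\log d$ and hence total weight $\tfrac14\log d$.

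Your diagnosis of the remaining obstacle differs from the paper's. You frame the gap as requiring an off-diagonal asymptotic $r(s,t)\le 2^{(c_0+o(1))(s+t)}$ uniformly over bounded $s/t$, which is indeed not implied by the mere existence of the diagonal limit. The paper's one-line view is instead that ``the optimal bounds should always follow from the diagonal case'': if the adversary's best strategy at both the inner level (within each $T_{i,d}$) and the outer weighted-Ramsey level is the balanced one, then each factor $\tfrac12$ improves to $c_0^{-1}$ using only the diagonal limit, and no off-diagonal input is needed. Neither route is established and the conjecture remains open, but note that the paper isolates a structural belief (diagonal extremality) rather than the quantitative off-diagonal hypothesis you formulate.
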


The construction of R\"odl described in the introduction can easily be modified to obtain $$f(n) \leq (c_0^{-2}+o(1))\log \log \log n.$$  Indeed, let $a=1+\epsilon$ with $\epsilon \rightarrow 0$ slowly as $n \to \infty$ (picking $\epsilon=1/\log \log \log n$ will do). Cover $[2,n]$ by intervals, where the $i$th interval is $[2^{a^{i-1}},2^{a^{i}})$  and has largest element less than $n_i:=2^{a^i}$. The number of intervals is $d=\lceil \frac{1}{\log a}\log \log n \rceil =O(\epsilon^{-1}\log \log n)$. Note that the logarithm of any two numbers in the same interval is within a factor $a=1+\epsilon$ of each other. We red-blue edge-color the complete graph on each of these intervals so as to minimize the order of the largest monochromatic clique in the interval. Then the weight of any monochromatic clique in the $i$th interval is at most $(1/\log n_i)(c_0^{-1}+o(1))\log n_i=c_0^{-1}+o(1)$, where the $o(1)$ term goes to $0$ as $n_i$ increases. We color between intervals monochromatic so as to minimize the order of the largest monochromatic clique with vertices in distinct intervals. The order of this monochromatic clique with vertices in distinct intervals is $(c_0^{-1}+o(1))\log d=(c_0^{-1}+o(1))\log \log \log n$. Hence, $f(n) \leq (c_0^{-1}+o(1))(c_0^{-1}+o(1))\log \log \log n=(c_0^{-2}+o(1))\log \log \log n$. 

In the other direction, a simple modification of the proof of Theorem \ref{erdosrodl} with a careful analysis gives the lower bound $$f(n) \geq \left(\frac{1}{4}-o(1)\right)\log \log \log n,$$ which would be sharp if the exponential constant in the upper bound for diagonal Ramsey numbers is best possible, i.e., if $c_0=2$. We next give a rough sketch of how to achieve this. 

One first constructs $d=(\log \log n)^{1-o(1)}$ intervals $S_i$ of the form $[n_i,2n_i)$ with  $n_i=i(\log \log n)^{o(1)}+\frac{1}{2}\log \log n$, where the $o(1)$ term slowly goes to $0$ as $n$ tends to infinity. After going through the proof, we obtain in each $S_i$ a red clique $R_i$ and a blue clique $B_i$, such that for each $i<j$, the complete bipartite graph between $R_i \cup B_i$ and $R_j \cup B_j$ is monochromatic. The monochromatic cliques $R_i$ and $B_i$ are chosen to be the largest monochromatic cliques of each color in a particular subset $T_{i,d} \subset S_{i}$  with $|T_{i,d}|=|S_i|^{1-o(1)}$. By the Erd\H{o}s-Szekeres estimate, we have $|R_i| \geq (r_i - o(1)) \log n_i$ and $|B_i| \geq (b_i - o(1)) \log n_i$ where $b_i$ and $r_i$ (asymptotically) satisfy $(b_i+r_i)\log \frac{(b_i+r_i)}{r_i}-b_i\log \frac{b_i}{r_i}=1$.

Consider the induced red-blue edge-coloring of the complete graph with one vertex $v_i$ from each $R_i \cup B_i$.  Assign vertex $v_i$ red weight $r_i$ and blue weight $b_i$. An appropriate variant of Lemma \ref{WeightedRamsey}, the weighted version of Ramsey's theorem, tells us that there is a monochromatic clique $v_{i_1}, v_{i_2}, \dots, v_{i_s}$ of large weight. Assuming without loss of generality that this clique is red, the tailored variant of Lemma \ref{WeightedRamsey} then tells us that the red weight of the clique is asymptotically at least $\frac{1}{4}\log d=(\frac{1}{4}+o(1))\log \log \log n$. This is obtained when for each $i$, $b_i=r_i=\frac{1}{2}+o(1)$ and the clique has size $\frac{1}{2} \log d$.  Let $S$ be the union of the $R_{i_j}$ with $1 \leq j \leq s$. As, for each $i<j$, the complete bipartite graph between $R_i \cup B_i$ and $R_j \cup B_j$ is monochromatic red, the set $S$ forms a monochromatic clique of weight $$\sum_{j \in S} \frac{1}{\log j} \geq  \left(\frac{1}{4}+o(1)\right)\log \log \log n.$$ 

The proof sketched above uses an application of both Ramsey's theorem and its weighted variant, so that the asymptotics of the lower bound on $f(n)$ are dictated by the bounds in these theorems. We believe that the optimal bounds should always follow, as above, from the diagonal case, in which case Conjecture \ref{conclude} would follow. 

\subsection{Weighted cliques with alternative weight functions}

One question which arises naturally is whether we can also find cliques of large weight for other weight functions. Let $w(i)$ be a weight function defined on all 
positive integers $n \geq a$ and let $f(n,w)$ be the minimum over all $2$-colorings of $[a, n]$ of the maximum weight of a monochromatic clique. In particular, if 
$w_1(i)=1/\log i$ and $a = 2$, then $f(n,w_1)=f(n)$.

The next interesting case is when $w_2(i) = 1/\log i \log \log \log i$, since, for any function $u(i)$ which tends to infinity with $i$, Theorem \ref{erdosrodl} 
implies that $f(n, u') \rightarrow \infty$, where $u'(i) = u(i)/\log i \log \log \log i$.  We may show also that $f(n, w_2) \rightarrow \infty$.

\vspace{0.2cm}
\noindent
{\bf Sketch of the proof.}\,
Suppose that we are using the weight function $w_2$. We consider the intervals $I_j=[n_j,2n_j)$  for which $2n_j \leq n$ with 
$\log \log n_j=10j\log \log \log n$. The number $d$ of such intervals is $\log \log n/10\log \log \log n$. 
By applying the methods used in the proof of Theorem \ref{RodlMain}, we may find $d$ sets $T_1, T_2, \dots, T_d$, with $T_j \subset I_j$, the 
collection of edges between $T_i$ and $T_j$ is monochromatic for every $i \neq j$, and each $T_j$ is the union of a red clique of size roughly $r_j \log n_j$ and a 
blue clique of size $b_j \log n_j$. Here $r_j$ and $b_j$ are chosen to satisfy the balancing condition stipulated by Lemma \ref{ErdSzek}. 
Any vertex in $T_j$ will have weight about $1/\log n_j \log \log \log n_j$, the full contribution of the red clique is $\Omega(r_j/\log \log \log n_j)=\Omega(r_j/(\log j+\log  \log \log \log n))=\Omega(r_j/\log \max(j, \log \log \log n))$,  and the blue clique is $\Omega(b_j/\log \max(j, \log \log \log n))$.

We may now treat the $T_j$ as though they were vertices with two weights in a graph whose edges have been $2$-colored. For $j \geq \log \log \log n$, the red weight is $r_j/\log j$ and the blue weight is $b_j/\log j$. For smaller $j$, the red weight is $r_j/\log \log \log \log n$ and the blue weight is $b_j/\log \log \log \log n$. However, there are so few such smaller $j$ that we will be able to safely ignore such vertices. 
We would like to repeat the argument above with this new graph on $d$ vertices. To begin, we consider $e \approx
\log \log d / 10\log \log \log d$ intervals $S_1, \dots, S_e$ in $[d]$, each of the form $[d_i,2d_i)$ with $\log \log d_i=10i\log \log \log d$. For the rest of the argument we only consider vertices $j$ in one of these intervals, so that $j \geq d_1 \geq \log \log \log n$ and $j$ has red weight $r_j/\log j$ and blue weight $b_j/\log j$. We may assume that $r_j$ and $b_j$ are each less than $(\log j)^2$, as otherwise the vertex $j$, or rather the red or blue subset of $T_j$, would be a monochromatic clique of weight $\Omega(\log j)=\Omega(\log \log \log \log n)$. By Lemma \ref{ErdSzek}, this also implies that all $r_j$ and $b_j$ are at least $1/(16\log \log j)$. Therefore the ratio between any two of $r_j$ and any two of $b_j$ is at most  $16\log^2 j \log \log \log j \leq (\log j)^3$ and hence
we may split each $S_i$ into $h_i=6\log \log d_i$ subsets, so that the $r_j$ and $b_j$ are within a factor $2$ 
of each other within each piece. That is, we are decomposing the interval $S_i$ into $S_{i,1}, \dots, S_{i, h_i}$ so that within any $S_{i,\ell}$ all $r_j$ and $b_j$ are essentially the same. Within each $S_i$, we pass to the largest $S_{i, \ell}$, which we will call $U_i$. As $|U_i| \geq d_i/(6\log \log d_i)$, we have $\log |U_i| \approx \log |S_i|$ for each $i$. We let $r_i'$ and $b_i'$ be  the minimum over $j \in U_i$ of $r_j$ and $b_j$, respectively.

If we again apply the method of Theorem \ref{RodlMain}, we will find a collection of sets $T'_i\subset S_i$ such that the graph is monochromatic between any two sets and $T'_i$ contains a red clique of size $\hat r_i \log |U_i| \approx \hat r_i \log 
|S_i|$ and a blue clique of size roughly $\hat b_i \log |S_i|$. The red clique will have red weight $\Omega(\hat  r_i r'_i)$ and the blue clique will have blue weight 
$\Omega(\hat b_i b'_i)$. Treating the $T'_i$ as though they were the vertices in a graph, we see that the vertex $i$ will have red weight $\Omega(\hat r_i r'_i)$ and blue 
weight $\Omega(\hat b_i b'_i)$, where $\hat r_i$ and $\hat b_i$ as well as $r'_i$ and $b'_i$ satisfy, up to a constant factor, the balancing criterion stipulated by Lemma 
\ref{ErdSzek}. It is now easy to verify that the weight functions $\hat r_i r'_i$ and $\hat b_i b'_i$ satisfy the requirements of Lemma 
\ref{WeightedRamsey2} with $c>0$ an appropriately chosen absolute constant. Hence, we will be able to find a monochromatic clique of weight $\Omega(\log e) = \Omega(\log \log \log d) = \Omega(\log \log \log \log \log n)$. This yields a clique of the same weight in the original graph. \hfill $\Box$

\vspace{0.2cm}

It is not hard to show that this bound is tight up to the constant. Color the interval $I_j = [2^{2^{j-1}}, 2^{2^j})$ so that the largest clique has size at most 
$2^{j+1}$.  Then the contribution of the $j$th interval will be at most $4/\log j$. We now treat $I_j$ as though it were a vertex of weight $4/\log j$ and, 
blowing up R\"odl's coloring, color monochromatically between the different $I_j$ so that the largest weight of any monochromatic clique is $O(\log \log \log d) = 
O(\log \log \log \log \log n)$.

On the other hand, by using R\"odl's coloring, we can show that if $w'_1 (i) = 1/(\log i)^{1+\epsilon}$, for any fixed $\epsilon > 0$, then $f(n, w'_1)$ 
converges. By using the coloring from the previous paragraph, we may improve this to show that if $w'_2 (i) = 1/\log i (\log \log \log i)^{1 + \epsilon}$, then 
$f(n, w'_2)$ also converges.

More generally, we have the following theorem. Here $\log_{(i)} (x)$ is the iterated logarithm given by $\log_{(0)}(x) = x$ and, for $i \geq 1$, $\log_{(i)} (x) = 
\log (\log_{(i-1)} (x))$.

\begin{theorem}
Let $w_s(i)=1/\prod_{j=1}^s \log_{(2j-1)} i$. Then $f(n,w_s) = \Theta(\log_{(2s+1)} n)$. However, letting
$w'_s(x)=w_s(x)/(\log_{(2s-1)} i)^{\epsilon}$ for any fixed $\epsilon > 0$, then $f(n,w'_s)$ converges.
\end{theorem}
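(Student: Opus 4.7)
The plan is to induct on $s$. The base case $s=1$ combines Theorem \ref{erdosrodl} with the elementary fact that $f(n, w'_1) = O(1)$, obtained from R\"odl's doubly-exponential interval construction: inside $I_j = [2^{2^{j-1}}, 2^{2^j})$ the largest monochromatic clique contributes $O(2^{j+1}/(2^j)^{1+\epsilon}) = O(2^{-j\epsilon})$ to the $w'_1$-weight, and these contributions sum geometrically. The inductive step mirrors the sketch for $w_2$ given above, with $w_{s-1}$ and $w'_{s-1}$ playing the roles of $w_1$ and $w'_1$.

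\emph{Upper bound.} Cover $[2,n]$ by $I_j = [2^{2^{j-1}}, 2^{2^j})$ for $1 \leq j \leq d := \lceil \log\log n \rceil$, and colour each $I_j$ internally so its largest monochromatic clique has order $O(2^j)$. For $i \in I_j$ we have $\log i = \Theta(2^j)$ and, for $k \geq 2$, $\log_{(2k-1)} i = \Theta(\log_{(2k-3)} j)$, so $w_s(i) = \Theta(w_{s-1}(j)/2^j)$ and a monochromatic clique contained in one $I_j$ has $w_s$-weight $O(w_{s-1}(j))$. Between intervals, use a $2$-colouring of $K_d$ that realises $f(d, w_{s-1}) = O(\log_{(2s-1)} d)$ from the inductive hypothesis. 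A monochromatic clique of the whole graph meets some $J \subseteq [d]$ of intervals and hence has $w_s$-weight $O\!\left(\sum_{j \in J} w_{s-1}(j)\right) = O(\log_{(2s-1)} d) = O(\log_{(2s+1)} n)$. The same construction with $w'_s$ in place of $w_s$ reduces the per-interval contribution to $O(w'_{s-1}(j))$, so inductively $f(n, w'_s) = O(f(d, w'_{s-1})) = O(1)$.

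\emph{Lower bound.} I would re-run the proof of Theorem \ref{erdosrodl} with interval spacing rescaled to $s$: take $S_i = [n_i, 2n_i)$ with $\log\log n_i = iL + \tfrac{1}{2}\log\log n$ for a slowly-growing $L = (\log\log n)^{o(1)}$ and $d = \Theta((\log\log n)/L)$ levels, chosen so that $\log_{(2k-1)} n_i = (1+o(1)) \log_{(2k-3)} i$ for all $2 \leq k \leq s$. The argument still produces in each $S_i$ a red clique $R_i$ of order $\tfrac{1}{4} r_i \log n_i$ and a blue clique $B_i$ of order $\tfrac{1}{4} b_i \log n_i$, with $(r_i, b_i)$ satisfying the balancing hypothesis of Lemma \ref{WeightedRamsey2} and with the bipartite graph between $R_i \cup B_i$ and $R_j \cup B_j$ monochromatic for $i < j$. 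By the slow-variation estimate the $w_s$-weight of $R_i$ is $\Omega(r_i \cdot w_{s-1}(i))$, and analogously for $B_i$ with $b_i$. The task reduces to finding, in the reduced $2$-coloured graph on $[d]$, a monochromatic clique of large $w_{s-1}$-weighted score, and a weighted inductive hypothesis yields such a clique of score $\Omega(\log_{(2s-1)} d) = \Omega(\log_{(2s+1)} n)$; this translates back into a monochromatic $w_s$-weighted clique of the same order in the original graph.

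The main obstacle is formulating and proving the correct weighted inductive hypothesis. Theorem \ref{erdosrodl} ends by piping its reduced graph into Lemma \ref{WeightedRamsey2}, which takes only scalar vertex weights. For the reduction above, each vertex $i$ of the reduced graph carries a pair $(r_i, b_i)$ satisfying the usual balancing inequality, while the quantities being maximised are the $\mu$-weighted sums $\sum r_i \mu(i)$ and $\sum b_i \mu(i)$ for the auxiliary weight $\mu(i) = w_{s-1}(i)$. The natural way to handle this is to thread $\mu$ through the whole argument from the outset; because $\mu$ is a product of iterated logarithms and varies slowly on the scales on which the argument operates, the balancing inequality is preserved under rescaling up to constant factors, and a suitably parameterised variant of Lemma \ref{WeightedRamsey2} closes the induction.
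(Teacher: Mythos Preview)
Your inductive plan and the upper-bound argument match the paper's approach exactly: the paper only sketches the case $s=2$ and leaves the general statement without proof, and its sketch is precisely the two-level recursion you describe (cover by doubly-exponential intervals, reduce $w_s$ on $[n]$ to $w_{s-1}$ on $[d]$ with $d\approx\log\log n$, and iterate).

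For the lower bound, you correctly isolate the real difficulty --- that the reduced problem on $[d]$ is not literally $f(d,w_{s-1})$ but a version in which each vertex $j$ additionally carries a balanced pair $(r_j,b_j)$, and one must maximise $\sum r_j\,w_{s-1}(j)$ or $\sum b_j\,w_{s-1}(j)$. Your proposed fix, ``thread $\mu=w_{s-1}$ through a parameterised variant of Lemma~\ref{WeightedRamsey2},'' is plausible but vague, and substituting $(r_v\mu(v),b_v\mu(v))$ directly into the lemma does not preserve the balance hypothesis. The paper's sketch handles this differently and more concretely: it first shows that each $r_j,b_j$ lies between $1/(16\log\log j)$ and $(\log j)^2$, then partitions each outer interval $S_i$ into $O(\log\log d_i)$ buckets on which both $r_j$ and $b_j$ are constant up to a factor~$2$, and passes to the largest bucket $U_i$ (losing only a $\log\log$ factor in size, which is absorbed). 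After this bucketing, $r'_i$ and $b'_i$ are genuine constants attached to $U_i$; running the R\"odl--DRC machinery inside $U_i$ produces a second balanced pair $(\hat r_i,\hat b_i)$, and one then checks that the \emph{products} $(\hat r_i r'_i,\hat b_i b'_i)$ again satisfy the hypothesis of Lemma~\ref{WeightedRamsey2} with an absolute constant~$c$. This product-of-balanced-pairs verification is the step doing the work your ``thread $\mu$ through'' hand-wave gestures at; it is what lets the induction close without inventing a new weighted lemma.
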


That is, the sequence of functions $w_s$ form a natural boundary below which $f(n, \cdot)$ converges.

\subsection{A counterexample to finding skewed cliques in hypergraphs}

For $3$-uniform hypergraphs, the Ramsey number $r_3(t)$ is defined to be the smallest natural number $n$ such that in any $2$-coloring of the edges of $K_n^{(3)}$ there is a monochromatic copy of $K_t^{(3)}$. It is known (see \cite{CFS10, EHR65, ER52}) that
\[2^{c t^2} \leq r_3(t) \leq 2^{2^{c' t}}\]
and the upper bound is widely conjectured to be correct. Phrased differently, we know that every $2$-coloring of the edges of $K_n^{(3)}$ contains a monochromatic clique of size at least $\Omega(\log \log n)$ and that there are $2$-colorings of $K_n^{(3)}$ which contain no monochromatic clique of size $O(\sqrt{\log n})$. 

Let $\rho_3 (n)$ be the function which gives the minimum size of the largest monochromatic clique taken over every $2$-coloring of $K_n^{(3)}$. Note that this function is increasing and that $\rho_3(r_3(t)) = t$. In keeping with Erd\H{o}s' conjecture for graphs, we can give a weight of $1/\rho_3 (i)$ to vertex $i$ and let the weight of a set $S$ be $\sum_{i \in S} 1/\rho_3 (i)$. We then ask for the minimum over all $2$-colorings of the edges of the complete $3$-uniform hypergraph on vertex set $[n]$ of the maximum weight of a monochromatic clique.

Split $[n]$ into intervals given by $R_j = [r_3(2^{j-1}), r_3(2^j))$. Within each interval, we color so that the largest monochromatic clique has size at most $2^j$. If $i < j$, we color edges containing two vertices from $R_i$ and one vertex from $R_j$ red and edges containing two vertices from $R_j$ and one vertex from $R_i$ blue. We color all other edges arbitrarily. 

Suppose now that we have a monochromatic clique $S$. Then $S$ has at most one vertex in all but one of the sets $R_j$. Otherwise, if there were two vertices, say $u_1$ and $u_2$, in $R_i$ and two vertices, $v_1$ and $v_2$, in $R_j$, the edges $u_1 u_2 v_1$ and $u_1 v_1 v_2$ would have opposite color. We may therefore suppose that $S = T_{\ell} \cup \{s_1, s_2, \dots\}$, where $T_{\ell} \subset R_{\ell}$ and $s_i$ is a single vertex from $R_i$. 

Since, for any $i \in R_{\ell}$, we have $\rho(i) \geq \rho_3(r_3(2^{\ell-1})) = 2^{\ell-1}$ and the largest monochromatic clique in $R_{\ell}$ has size at most $2^{\ell}$, the contribution from $T_{\ell}$ is at most $2$. Similarly, the contribution from $s_i$ is at most $2^{1-i}$, so that total weight of the clique is at most $2 + \sum_{i=1}^{\infty} 2^{1-i} \leq 4$. Therefore, unlike the graph case, there are colorings for which the maximum weight of a monochromatic clique is bounded.

\subsection{A simple construction}\label{sect5}

Here we present a simple explicit construction which beats the random lower bound for Ramsey numbers for a certain prescribed order on the consecutive differences. A sequence 
$n_1<n_2<\ldots<n_k$ is {\it convex} if $n_2-n_1<n_3-n_2<\ldots<n_k-n_{k-1}$. 

\begin{proposition} \label{easyconv} For $i<j$, let $f(i,j)=\lfloor \log (j-i) \rfloor$. Consider the $2$-edge-coloring of the complete graph on the first $n=4^{k-1}$ positive integers where the color of edge $(i,j)$ with $i<j$ is the parity of $f(i,j)$. This coloring has no convex monochromatic clique of order $k+1$.
\end{proposition}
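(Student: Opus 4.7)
My plan is to argue by contradiction. Suppose there is a convex monochromatic clique $n_1<n_2<\cdots<n_{k+1}$, write $d_i=n_{i+1}-n_i$ so that, by convexity, $d_1<d_2<\cdots<d_k$, and let $m_i=f(n_i,n_{i+1})=\lfloor\log d_i\rfloor$. Since the clique is monochromatic, the edges $(n_i,n_{i+1})$ all have the same color, so all the $m_i$ share a common parity. Note also that $m_1\geq 0$ since $d_1\geq 1$, and that $d_k\leq n_{k+1}-n_k\leq n-1=4^{k-1}-1$.

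The crux is to show that $m_i$ is strictly increasing in $i$, which combined with the parity constraint forces $m_{i+1}\geq m_i+2$. To see the strict increase, I would examine the edge $(n_i,n_{i+2})$, whose length is $d_i+d_{i+1}$. If $m_i=m_{i+1}=m$, then $2^m\leq d_i<d_{i+1}<2^{m+1}$, so
\[
2^{m+1}\leq d_i+d_{i+1}<2^{m+2},
\]
which gives $f(n_i,n_{i+2})=m+1$. But $m+1$ has the opposite parity to $m$, so the edge $(n_i,n_{i+2})$ has the wrong color, contradicting monochromaticity. Hence $m_{i+1}>m_i$, and the common-parity property bumps this up to $m_{i+1}\geq m_i+2$.

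Iterating, $m_k\geq m_1+2(k-1)\geq 2(k-1)$, whence $d_k\geq 2^{2(k-1)}=4^{k-1}$. This contradicts $d_k\leq 4^{k-1}-1$ and finishes the proof.

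The only place where there is anything to check is the dyadic-window computation in the middle step; everything else is bookkeeping. This step is not really an obstacle but it is the one spot where a miscalculation could sink the argument, so I would take care to note the sharp inequalities $d_i\geq 2^{m}$ and $d_{i+1}<2^{m+1}$ that make $d_i+d_{i+1}$ land in the next dyadic window exactly.
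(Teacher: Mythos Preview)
Your proof is correct and follows essentially the same approach as the paper's: both argue by contradiction, both exploit the edge $(n_i,n_{i+2})$ to force $f(n_i,n_{i+2})=m+1$ when $m_i=m_{i+1}=m$, and both conclude $m_{i+1}\geq m_i+2$ via the parity constraint, arriving at the same final contradiction $d_k\geq 4^{k-1}$. The only cosmetic difference is that the paper handles the cases $m_{i+1}=m_i$ and $m_{i+1}=m_i+1$ in parallel, whereas you first rule out equality and then invoke parity; the content is identical.
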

\begin{proof}
Suppose for contradiction that $a_1<\ldots<a_{k+1}$ is a convex monochromatic clique of order $k+1$ in this $2$-edge-coloring of the complete graph on $n$. We claim that for $1 \leq i \leq k-1$, $f(a_{i+2},a_{i+1}) \geq f(a_{i+1},a_i)+2$. Indeed, as the sequence is convex, $a_{i+1}-a_i<a_{i+2}-a_{i+1}$, and hence $f(a_{i+2},a_{i+1}) \geq f(a_{i+1},a_i)$. If the claim does not hold, then  
for some $i$, $1 \leq i \leq k-1$, we have $f(a_{i+2},a_{i+1}) = f(a_{i+1},a_i)$ or $f(a_{i+2},a_{i+1}) = f(a_{i+1},a_i)+1$. In the first case, as $a_{i+2}-a_{i}=(a_{i+2}-a_{i+1})+(a_{i+1}-a_i)$, we have that $
f(a_{i+2},a_i)=f(a_{i+1},a_i)+1$, so the edges $(a_i,a_{i+2})$ and $(a_i,a_{i+1})$ are different colors. In the second case, $(a_{i+2},a_{i+1})$ and $(a_{i+1},a_i)$ are different colors. As the clique is monochromatic, this cannot happen, and hence the claim holds. From the claim, we have $f(a_{k+1},a_k) \geq f(2,1)+2(k-1) \geq 2(k-1)$. It follows that $a_{k+1} > a_{k+1}-a_k \geq 2^{2(k-1)}$, contradicting $a_{k+1} \leq n=4^{k-1}$ and completing the proof. 
\end{proof}

We actually proved that not only is there no convex monochromatic complete graph on $k+1$ vertices in the $2$-edge-coloring of the complete graph on the first $4^{k-1}$ positive 
integers, but also a much sparser graph on $k+1$ vertices is forbidden as a monochromatic subgraph in convex position, namely, the square of the monotone path on $k+1$ vertices. That 
is, for this coloring, there is no convex sequence $a_1,\ldots,a_{k+1}$ such that all edges $(a_i,a_j)$ with $|j-i| \leq 2$ are the same color. This is in strong contrast to Ramsey 
numbers without order, where the Ramsey number of the square of a path or, more generally, any bounded degree graph (see, e.g., \cite{CRST, CFS11}) is linear in the 
number of vertices.

As with ordinary Ramsey numbers, the lower bound for complete Ramsey numbers with order types which comes from considering a random $2$-edge-coloring of the complete graph is of the 
form $2^{k/2+o(k)}$. As the simple constructive coloring in Proposition \ref{easyconv} gives a better bound while forbidding a much sparser structure, it suggests that Ramsey's theorem 
with order types is a substantially different and more intricate problem than Ramsey's theorem.

\subsection{Counterexamples to variants of Ramsey's theorem with order types} 

There are several natural variants of V\"a\"an\"anen's question which have negative answers. For example, the  natural hypergraph analogue fails. Indeed, there is a coloring of the complete $3$-uniform hypergraph on the positive integers such that every  
monochromatic set $a_1,\ldots,a_k$ satisfies that the sequence $a_2-a_1,a_3-a_2,\ldots,a_k-a_{k-1}$ of consecutive differences is monotone. We color an edge $(a_1,a_2, a_3)$ with $a_1<a_2<a_3$ red if $a_3-a_2 \geq a_2-a_1$ and blue otherwise. Hence, if  $a_1<a_2<a_3<a_4$ are positive integers, $(a_1,a_2,a_3)$ and $(a_2,a_3,a_4)$ are both red or both blue if and only if $a_2-a_1,a_3-a_2,a_4-a_3$ is a monotone sequence. 

Another variant which fails to hold is the case of monochromatic cliques where the higher differences have a prescribed order. This was first observed by Erd\H{o}s, Hajnal and Pach 
\cite{EHP}. We give such an example forbidding an ordering of the second differences $a_{i+2} - a_i$. Before describing this coloring, we first remark that it is easy to show that 
any second difference is realizable. That is, for any permutation $\pi$ of $[k-2]$, there are (many) sequences $a_1<\cdots<a_k$ of positive integers satisfying
$$a_{\pi(1)+2}-a_{\pi(1)}>a_{\pi(2)+2}-a_{\pi(2)}>\cdots>a_{\pi(k-2)+2}-a_{\pi(k-2)}.$$ 
However, for certain $\pi$ there exist $2$-edge-colorings of the complete graph on the positive 
integers in which none of these sequences form a monochromatic clique. Indeed, consider the $2$-edge-coloring of the complete graph on the positive integers, where the color of $(i,j)$ 
with $i<j$ is given by the parity of $f(i,j)=\lfloor \log (j-i) \rfloor$. In this coloring, no monochromatic clique with vertices $a_1<a_2<a_3<a_4<a_5<a_6<a_7$ satisfies $a_5-a_3$ is 
the largest of the second differences and $a_4-a_2,a_6-a_4$ are the two smallest second differences. Suppose that such a monochromatic clique exists. By symmetry, we may assume without loss of generality that $a_4-a_3 \geq a_5-a_4$. For $a_i<a_j<a_h$, as $a_h-a_i=(a_h-a_j)+(a_j-a_i)$, we have $\max(f(a_i,a_j),f(a_j,a_h))\leq f(a_i,a_h)\leq \max(f(a_i,a_j),f(a_j,a_h))+1$. 
Since the parity of $f(a,b)$ is the same for any two vertices $a<b$ of the monochromatic clique, we must have $f(a_i,a_h)=\max(f(a_i,a_j),f(a_j,a_h))$. In particular, this implies 
$f(a_3,a_5)=f(a_3,a_4)$ and $f(a_1,a_5)=f(a_3,a_5)$. Since $a_3-a_1 \geq a_4-a_2$ (by minimality of $a_4-a_2$), we must have $a_2-a_1 \geq a_4-a_3$ and hence $f(a_3,a_5) \geq 
f(a_1,a_3) \geq f(a_1,a_2) \geq 
f(a_3,a_4)=f(a_3,a_5)$, where the first inequality comes from the fact that $a_5-a_3$ is the largest second difference. But if $f(a_1,a_3)=f(a_3,a_5)$, then $f(a_1,a_5)>f(a_3,a_5)$, 
contradicting the equality deduced earlier.

{\bf Acknowledgments.} We would like to thank Noga Alon for helpful discussions and, in particular, for raising the question of what other weight functions might work in Erd\H{o}s' conjecture. We would also like to thank the anonymous referees for their many helpful remarks.

\end{document}